\numberwithin{equation}{section}
\newtheorem{theorem}{Theorem}[section]
\newtheorem{proposition}[theorem]{Proposition}
\newtheorem{lemma}[theorem]{Lemma}
\theoremstyle{definition}
\newtheorem{definition}[theorem]{Definition}
\newtheorem{remark}[theorem]{Remark}
\begin{document}

\baselineskip=15.5pt

\title[Genuinely ramified maps and pseudo-stable bundles]{Genuinely
ramified maps and pseudo-stable vector bundles}

\author[I. Biswas]{Indranil Biswas}

\address{School of Mathematics, Tata Institute of Fundamental
Research, Homi Bhabha Road, Mumbai 400005, India}

\email{indranil@math.tifr.res.in}

\author[A. J. Parameswaran]{A. J. Parameswaran}

\address{School of Mathematics, Tata Institute of Fundamental
Research, Homi Bhabha Road, Mumbai 400005, India}

\email{param@math.tifr.res.in}

\subjclass[2010]{14J60, 14E20, 13D07, 14F06}

\keywords{Genuinely ramified map, pseudo-stable bundle, neutral Tannakian category, 
fundamental group}

\date{}

\begin{abstract}
Let $X$ and $Y$ be irreducible normal projective varieties, of same dimension, defined
over an algebraically closed field, and let $f\,\, :\,\, Y\, \longrightarrow\, X$ be
a finite generically smooth morphism such that the corresponding homomorphism between the \'etale
fundamental groups $f_*\,:\,\pi^{\rm et}_{1}(Y) \, \longrightarrow\,\pi^{\rm et}_{1}(X)$
is surjective. Fix a polarization on $X$ and equip $Y$ with the pulled back
polarization. For a point $y_0\,\in\, Y$, let $\varpi(Y,\, y_0)$ (respectively,
$\varpi(X,\, f(y_0))$) be the affine group scheme given by the neutral Tannakian category
defined by the strongly pseudo-stable
vector bundles of degree zero on $Y$ (respectively, $X$). We prove that the homomorphism
$\varpi(Y,\, y_0)\, \longrightarrow\, \varpi(X,\, f(y_0))$ induced by $f$ is surjective.
Let $E$ be a pseudo-stable vector bundle on $X$ and $F\, \subset\, f^*E$ a pseudo-stable
subbundle with $\mu(F)\,=\, \mu(f^*E)$. We prove that $f^*E$ is pseudo-stable and there is
a pseudo-stable subbundle $W\, \subset\, E$ such that $f^*W\,=\, F$ as subbundles of $f^*E$.
\end{abstract}

\maketitle

\section{Introduction}

Let $X$ and $Y$ be irreducible normal projective varieties, of same dimension, defined
over an algebraically closed field $k$. Let
$$
f\,\, :\,\, Y\, \longrightarrow\, X
$$
be a morphism such that
\begin{itemize}
\item $f$ is finite,

\item $f$ is generically smooth, and

\item the homomorphism between the \'etale fundamental groups induced by $f$
$$
f_*\,:\,\pi^{\rm et}_{1}(Y) \, \longrightarrow\,\pi^{\rm et}_{1}(X)
$$
is surjective.
\end{itemize}
Fix an ample line bundle (also called a polarization) $L$ on $X$ and equip $Y$ with the polarization $f^*L$.

For any stable vector bundle $E$ on $X$, it is known that the pulled back vector bundle $f^*E$ is stable
\cite[Theorem 1.2]{BDP}. Here we consider the pullbacks of the more general class of pseudo-stable
vector bundles on $X$. A semistable vector bundle is called a pseudo-stable vector bundle if
it admits a filtration of subbundles
such that each successive quotient is a stable vector bundle whose slope coincides with the slope of the
ambient bundle.
A pseudo-stable vector bundle is called strongly pseudo-stable if its pullbacks by the powers of the
absolute Frobenius morphism are also pseudo-stable. (These are recalled in
Definition \ref{def1}; see also Remark \ref{rem-a} and Remark \ref{rb}.) The following theorem is proved here (see Theorem \ref{thm1}):

\begin{theorem}\label{thm0}
Let $E$ be a pseudo-stable vector bundle on $X$. Then the following statements hold:
\begin{enumerate}
\item The pullback $f^*E$ is pseudo-stable.

\item Let $F\, \subset\, f^*E$ be any pseudo-stable subbundle such that
$\mu(F)\,=\, \mu(f^*E)$. Then there is a pseudo-stable subbundle $V\,
\subset\, E$ such that the subbundle
$$
f^*V\, \subset\, f^*E
$$
coincides with the subbundle $F\, \subset\, f^*E$.
\end{enumerate}
\end{theorem}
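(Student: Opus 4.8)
\emph{Part (1).} Pullback along a finite morphism (the pulled‑back polarization remaining ample, as $f$ is affine) preserves semistability, so $f^*E$ is semistable of slope $(\deg f)\,\mu(E)=\mu(f^*E)$. Choose a Jordan–H\"older filtration $0=E_0\subset E_1\subset\cdots\subset E_\ell=E$ with each $E_i/E_{i-1}$ stable of slope $\mu(E)$. By \cite[Theorem~1.2]{BDP} each $f^*(E_i/E_{i-1})=f^*E_i/f^*E_{i-1}$ is stable, of slope $\mu(f^*E)$, so $0=f^*E_0\subset\cdots\subset f^*E_\ell=f^*E$ is a Jordan–H\"older filtration of $f^*E$, whence $\operatorname{gr}(f^*E)\cong f^*\operatorname{gr}(E)$. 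Since $f^*$ preserves ranks, the numerical condition on the associated graded that defines pseudo‑stability passes from $\operatorname{gr}(E)$ to $f^*\operatorname{gr}(E)$, so $f^*E$ is pseudo‑stable. Read in reverse, the same argument gives: \emph{if $V\subset E$ is a subbundle with $\mu(V)=\mu(E)$ and $f^*V$ is pseudo‑stable, then $V$ is pseudo‑stable}; this is used repeatedly below.

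\emph{Part (2): set‑up of an induction.} We induct on the Jordan–H\"older length $\ell$ of $E$. If $\ell=1$ then $E$ is stable, hence $f^*E$ is stable by \cite[Theorem~1.2]{BDP}; a stable bundle has no saturated subsheaf of its own slope besides $0$ and itself, so $F\in\{0,f^*E\}$ and $V\in\{0,E\}$ works. Let $\ell\ge 2$ and let $E_1\subset E$ be the first step of a Jordan–H\"older filtration of $E$: thus $E_1$ is stable of slope $\mu(E)$, $E/E_1$ is pseudo‑stable of length $\ell-1$, and $f^*E_1\subset f^*E$ is stable by \cite[Theorem~1.2]{BDP}. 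The subsheaf $F\cap f^*E_1$ is saturated in $f^*E_1$ (since $F$ is saturated in $f^*E$) and, being a subsheaf of the semistable bundle $F$ of slope $\mu(f^*E)$, has slope $\mu(f^*E)$ whenever it is nonzero; stability of $f^*E_1$ then forces $F\cap f^*E_1\in\{0,f^*E_1\}$.

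\emph{Part (2): the two cases.} \textbf{(a)} If $f^*E_1\subseteq F$, then $F/f^*E_1$ is a pseudo‑stable subbundle of $f^*(E/E_1)=f^*E/f^*E_1$ of slope $\mu(f^*(E/E_1))$, so by induction $F/f^*E_1=f^*\overline V$ for a pseudo‑stable $\overline V\subseteq E/E_1$; its preimage $V\subseteq E$ satisfies $f^*V=F$ and is pseudo‑stable by Part (1). \textbf{(b)} If $F\cap f^*E_1=0$, the composite $F\hookrightarrow f^*E\to f^*(E/E_1)$ realizes $F$ as a pseudo‑stable subbundle of $f^*(E/E_1)$ of the same slope, so by induction $F=f^*\overline V$ for a pseudo‑stable $\overline V\subseteq E/E_1$. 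Let $W_f$ be the (torsion‑free) cokernel of $\mathcal O_X\hookrightarrow f_*\mathcal O_Y$. Genuine ramification of $f$ gives
$$
H^0\!\big(X,\,\mathcal F\otimes W_f\big)\,=\,0\qquad\text{for every semistable bundle }\mathcal F\text{ of degree }0\text{ on }X,
$$
so, applying this to $\mathcal F=\overline V^{\,\vee}\otimes E_1$ (semistable of degree $0$) and using $0\to\mathcal O_X\to f_*\mathcal O_Y\to W_f\to 0$, the map $f^*$ is an isomorphism $\operatorname{Hom}_X(\overline V,E_1)\xrightarrow{\ \sim\ }\operatorname{Hom}_Y(f^*\overline V,f^*E_1)$ and an injection on $\operatorname{Ext}^1$. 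Writing $P\subseteq E$ for the preimage of $\overline V$, the inclusion $F\subseteq f^*P$ splits $0\to f^*E_1\to f^*P\to f^*\overline V\to 0$, so the class of $P$ vanishes in $\operatorname{Ext}^1_X(\overline V,E_1)$; hence $P\cong E_1\oplus\overline V$ over $X$, and $F\subseteq f^*P=f^*E_1\oplus f^*\overline V$ is the graph of some $\phi\in\operatorname{Hom}_Y(f^*\overline V,f^*E_1)$, which equals $f^*\phi_0$ for a unique $\phi_0\in\operatorname{Hom}_X(\overline V,E_1)$ by the isomorphism above. The graph of $\phi_0$ is then a subbundle $V\subseteq P\subseteq E$ with $f^*V=F$, pseudo‑stable by Part (1). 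This closes the induction.

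\emph{Where the work is.} Everything rests on the displayed vanishing --- equivalently, that $\mathcal O_X$ is the unique maximal subsheaf of $f_*\mathcal O_Y$ which is semistable of degree $0$ --- and this is the sharp translation of the hypothesis that $f_*\colon\pi_1^{\mathrm{et}}(Y)\to\pi_1^{\mathrm{et}}(X)$ is surjective. Proving it (and, with it, the equivalent assertion that $\varpi(Y,y_0)\to\varpi(X,f(y_0))$ is faithfully flat, via the dictionary between surjections of affine group schemes and representation categories closed under subobjects) is the step I expect to demand the real effort. One caveat: over a field of positive characteristic, ``$\overline V^{\,\vee}\otimes E_1$ semistable'' requires $\overline V$ and $E_1$ to be \emph{strongly} semistable, so as written the argument yields the theorem for strongly pseudo‑stable bundles there, and the general pseudo‑stable case needs a further reduction (Frobenius pullbacks, or passage to a Galois closure of $f$).
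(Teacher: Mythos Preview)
Your Part~(1) is exactly the paper's argument. Part~(2), however, is genuinely different. The paper does \emph{not} induct on Jordan--H\"older length; instead it pushes the exact sequence $0\to F\to f^*E\to (f^*E)/F\to 0$ forward by $f_*$, and for each term $B\in\{F,\,f^*E,\,(f^*E)/F\}$ extracts the maximal pseudo-stable locally free subsheaf $V_B\subset f_*B$ of slope $\mu(E)$. Two preparatory results do the work: Proposition~\ref{prop1} gives the rank bound $\operatorname{rank}(V_B)\le\operatorname{rank}(B)$, and Proposition~\ref{prop2} shows that equality forces $f^*V_B\cong B$. Since $E\subset f_*f^*E$ already sits inside $V_{f^*E}$, the rank bound is an equality for $B=f^*E$, and the left-exact sequence $0\to V_F\to V_{f^*E}\to V_{(f^*E)/F}$ then forces equality for $B=F$ as well, whence $f^*V_F=F$. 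Your induction with its two cases is more hands-on and avoids building this ``maximal pseudo-stable piece'' functor; the paper's route in turn yields Propositions~\ref{prop1} and~\ref{prop2} as standalone statements.

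Your self-diagnosed characteristic-$p$ gap is not real, and this is worth fixing rather than flagging. What you actually need in Case~(b) is
\[
\operatorname{Hom}_X\bigl(\overline V,\;E_1\otimes (f_*\mathcal O_Y/\mathcal O_X)\bigr)=0,
\]
for $\overline V$ pseudo-stable with $\mu(\overline V)=\mu(E_1)$ and $E_1$ stable. This is precisely what the paper invokes (from the proof of \cite[Lemma~4.1]{BP1}; see also \cite[Theorem~2.5]{BDP}) inside the proof of Proposition~\ref{prop1}, and it holds in all characteristics. It does \emph{not} require $\overline V^\vee\otimes E_1$ to be semistable: the vanishing is proved directly at the level of $\operatorname{Hom}$, not by first identifying $H^0(\mathcal F\otimes W_f)$ with a slope obstruction. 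Once you phrase it this way, your long exact sequence gives both the isomorphism on $\operatorname{Hom}$ and the injection on $\operatorname{Ext}^1$ uniformly, and the argument is complete without any Frobenius or Galois-closure detour.

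Two minor points. Your ``read in reverse'' claim (that $f^*V$ pseudo-stable implies $V$ pseudo-stable) is circular as stated, since descending a Jordan--H\"older filtration of $f^*V$ is exactly Part~(2). You don't need it: in Case~(a) the preimage $V$ visibly sits in $0\to E_1\to V\to\overline V\to 0$ and is pseudo-stable by concatenating filtrations; in Case~(b) your $V$ is isomorphic to $\overline V$ via projection. (Alternatively, see Remark~\ref{rem2}.) Also, in Case~(b) you should check that the image of $F$ in $f^*(E/E_1)$ is saturated before invoking the inductive hypothesis; this follows because both the image and its saturation are reflexive of the same rank and degree on a normal variety.
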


Let $Z$ be a normal projective variety defined over $k$. Fix a point $z_0\, \in\, Z$,
and also fix an ample line bundle on $Z$. 
Consider the category of strongly pseudo-stable vector bundles on $Z$ of degree zero.
It is a neutral Tannakian category \cite[Theorem 1]{BaPa}. We denote by $\varpi(Z,\, z_0)$
the proalgebraic group scheme over $k$ defined by this neutral Tannakian category. This
neutral Tannakian category given by the strongly pseudo-stable vector bundles on $Z$ of degree zero
has a full subcategory given by all strongly pseudo-stable vector bundle $E$ on $Z$ such that
$c_i(E)$ is numerically equivalent to zero for all $i\, \geq\, 1$. This subcategory is actually
independent of the choice of the polarization on $Z$. Moreover, this subcategory is again a neutral Tannakian
category. We denote by $\varpi^S(Z,\, z_0)$ proalgebraic group scheme over $k$ defined by this
neutral Tannakian category.

As an application of Theorem \ref{thm0}, we prove the following (see Theorem \ref{thm3a}):

\begin{theorem}\label{thm02}
Fix a point $y_0\, \in\, Y$ and an ample line bundle on $X$; equip $Y$ with the pulled
back polarization. Then the homomorphism
$$
f_*\,\, :\,\, \varpi(Y,\, y_0)\, \longrightarrow\, \varpi(X,\, f(y_0))
$$
induced by $f$ is faithfully flat (in other words, the homomorphism $f_*$ is surjective).
\end{theorem}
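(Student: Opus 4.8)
The plan is to argue via Tannakian duality, using the criterion of Deligne and Milne: a homomorphism $u\colon A\to B$ of affine group schemes over $k$ is faithfully flat exactly when the associated restriction functor $u^*\colon\mathrm{Rep}(B)\to\mathrm{Rep}(A)$ is fully faithful and, for every object $V$ of $\mathrm{Rep}(B)$, every subobject of $u^*V$ is the image under $u^*$ of a subobject of $V$ (see \cite[Proposition 2.21]{DM}). In our situation $u=f_*$ and $u^*$ is the pullback functor $f^*$, which is a faithful exact $k$-linear tensor functor between the two neutral Tannakian categories in question; it does take the category on $X$ into the one on $Y$, because for a strongly pseudo-stable $E$ of degree zero the bundle $f^*E$ is pseudo-stable by Theorem \ref{thm0}(1), has degree zero since $\deg_{f^*L}(f^*E)=(\deg f)\deg_L(E)$, and is strongly pseudo-stable because Frobenius pullbacks commute with $f^*$, so Theorem \ref{thm0}(1) applies to every Frobenius pullback of $E$. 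Thus the theorem reduces to two points: \textbf{(A)} $f^*$ is fully faithful, and \textbf{(B)} every subobject of $f^*E$ is the pullback of a subobject of $E$.

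For \textbf{(A)}: given objects $E_1,E_2$, the internal Hom is $V:=E_1^\vee\otimes E_2$, which is again strongly semistable of degree zero, and $\mathrm{Hom}(E_1,E_2)=H^0(X,V)$ while $\mathrm{Hom}(f^*E_1,f^*E_2)=H^0(Y,f^*V)$. By the projection formula $H^0(Y,f^*V)=H^0(X,V\otimes f_*\mathcal{O}_Y)$, so, writing $f_*\mathcal{O}_Y=\mathcal{O}_X\oplus Q$, it suffices to prove $H^0(X,V\otimes Q)=0$, that is, $\mathrm{Hom}(V^\vee,Q)=0$. This is the step that uses the hypothesis on $f_*\colon\pi^{\mathrm{et}}_1(Y)\to\pi^{\mathrm{et}}_1(X)$: since $f$ is genuinely ramified, the sheaf $Q$ contains no nonzero semistable subsheaf of slope $\ge 0$, i.e.\ $\mu_{\max}(Q)<0$ (cf.\ \cite{BDP}). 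A nonzero homomorphism $V^\vee\to Q$ would have for its image a quotient of the semistable degree-zero bundle $V^\vee$, hence a sheaf of slope $\ge 0$, which would therefore contain a nonzero semistable subsheaf of $Q$ of slope $\ge 0$ --- impossible. Hence $H^0(X,V\otimes Q)=0$ and $f^*$ is fully faithful.

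For \textbf{(B)}: let $F\subset f^*E$ be a subobject. Since the category is abelian with sheaf-theoretic kernels and all its objects are semistable of degree zero, $F$ is a saturated subsheaf of $f^*E$ with $\mu(F)=\mu(f^*E)$. Pick a pseudo-stable subsheaf $F_1\subset F$ of the same slope (for example a first Jordan--H\"older subsheaf); then $F_1$ is a pseudo-stable subbundle of $f^*E$ with $\mu(F_1)=\mu(f^*E)$, so Theorem \ref{thm0}(2) provides a pseudo-stable subbundle $V_1\subset E$ with $f^*V_1=F_1$. One checks that $V_1$ is again an object of the Tannakian category on $X$: it has degree zero, and it is strongly pseudo-stable, since $f^*(\mathrm{Frob}^*V_1)=\mathrm{Frob}^*F_1$ is pseudo-stable of degree zero and pullback along the genuinely ramified morphism $f$ reflects this property. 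Now replace $E$ by $E/V_1$, the bundle $f^*E$ by $f^*(E/V_1)=(f^*E)/F_1$, and $F$ by the subobject $F/F_1$, whose rank is strictly smaller; an induction on $\mathrm{rk}(F)$ yields a subobject $V\subset E$ with $f^*V=F$. Together with \textbf{(A)} this proves the theorem.

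The real difficulty is Theorem \ref{thm0} itself, which is assumed here; granted that input, step \textbf{(B)} is more or less formal. The remaining points deserving attention are the translation of ``$f_*\colon\pi^{\mathrm{et}}_1(Y)\to\pi^{\mathrm{et}}_1(X)$ is surjective'' into the inequality $\mu_{\max}(Q)<0$ in \textbf{(A)}, and, in \textbf{(B)}, the verification that the lifted subbundle $V$ genuinely belongs to the Tannakian category on $X$, for which one needs that $f^*$ not merely preserves but also detects the property of being strongly pseudo-stable of degree zero when $f$ is genuinely ramified.
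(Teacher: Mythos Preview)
Your proof is correct and follows the same route as the paper: apply the Deligne--Milne criterion \cite[Proposition~2.21(a)]{DMOS}, check full faithfulness of $f^*$ via the vanishing $\mathrm{Hom}(V^\vee,(f_*\mathcal{O}_Y)/\mathcal{O}_X)=0$ (this is exactly what the paper cites as \cite[Lemma~4.3]{BP1}), and deduce the subobject condition from Theorem~\ref{thm0}(2).

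Two small remarks. First, the splitting $f_*\mathcal{O}_Y=\mathcal{O}_X\oplus Q$ need not exist in positive characteristic when $p\mid\deg f$ (the trace no longer splits the inclusion); but your argument only needs the short exact sequence $0\to V\to V\otimes f_*\mathcal{O}_Y\to V\otimes Q\to 0$ with $Q=(f_*\mathcal{O}_Y)/\mathcal{O}_X$, so nothing is lost. Second, the induction in \textbf{(B)} is unnecessary: a subobject $F\subset f^*E$ in the Tannakian category is by definition a strongly pseudo-stable bundle of degree zero, hence already pseudo-stable, so Theorem~\ref{thm0}(2) applies directly to $F$ and yields $V\subset E$ with $f^*V=F$ in one step. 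That $V$ is again strongly pseudo-stable then follows immediately from Remark~\ref{rem2} applied to each Frobenius pullback of $E$ (a subbundle of the same slope of a pseudo-stable bundle is pseudo-stable), which is simpler than the reflection argument you sketch.
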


Let $X$ and $Y$ be irreducible normal projective varieties (we do \textit{not} assume that
$\dim X\,=\, \dim Y$), and let
$$
f\,\, :\,\, Y\, \longrightarrow\, X
$$
be a morphism such that
\begin{itemize}
\item $f$ is surjective,

\item $f$ is generically smooth, and

\item the homomorphism between the \'etale fundamental groups induced by $f$
$$
f_*\,:\,\pi^{\rm et}_{1}(Y) \, \longrightarrow\,\pi^{\rm et}_{1}(X)
$$
is surjective.
\end{itemize}

We prove the following (see Proposition \ref{prop3}):

\begin{proposition}\label{prop0}
Fix a point $y_0\, \in\, Y$. Then the homomorphism
$$
f_*\,\, :\,\, \varpi^S(Y,\, y_0)\, \longrightarrow\, \varpi^S(X,\, f(y_0))
$$
induced by $f$ is faithfully flat.
\end{proposition}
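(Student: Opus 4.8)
The proof will run through the standard Tannakian criterion for faithful flatness: by the theory of Tannakian categories (Deligne--Milne), a homomorphism of affine $k$-group schemes $G\to G'$ is faithfully flat if and only if the associated $k$-linear tensor functor $\mathrm{Rep}(G')\to\mathrm{Rep}(G)$ is fully faithful and closed under subobjects, i.e. every subobject of an object in the essential image is again in the essential image, up to isomorphism. Here the category defining $\varpi^S(Z,z_0)$ is the category of strongly pseudo-stable bundles on $Z$ of degree zero with numerically trivial Chern classes, and I would first record that these are exactly the numerically flat bundles on $Z$ (those $V$ for which $V$ and $V^\vee$ are both nef), together with the observation that numerical flatness is preserved under pullback along an arbitrary morphism. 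Consequently $f^*$ does send the category attached to $X$ into the one attached to $Y$, so $f_*$ is defined; and since the relevant categories admit this intrinsic, polarization-free description, the fact that $f^*L$ need not be ample when $\dim Y>\dim X$ causes no trouble.

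I would then factor $f$ through its Stein factorization $Y\to X'\to X$, with $\rho\colon Y\to X'$ and $h\colon X'\to X$; thus $X'$ is an irreducible normal projective variety, $\rho_*\mathcal{O}_Y=\mathcal{O}_{X'}$ with $\rho$ having connected fibers, and $h$ is finite and surjective. Since $f$ is generically smooth, the generic fiber of $f$ is smooth, so its scheme of connected components is \'etale over $k(X)$; hence $h$ is \'etale over a dense open subset of $X$, and from $f_*=h_*\circ\rho_*$ on \'etale fundamental groups we get that $h_*\colon\pi^{\rm et}_1(X')\to\pi^{\rm et}_1(X)$ is surjective. Therefore $h$ is a finite genuinely ramified morphism between normal projective varieties of the same dimension, and Theorem \ref{thm02} applies to it: $\varpi(X',\rho(y_0))\to\varpi(X,f(y_0))$ is faithfully flat. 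Passing to the canonical quotients $\varpi\twoheadrightarrow\varpi^S$, which are faithfully flat because $\varpi^S$ is cut out by a full Tannakian subcategory and which are compatible with $h_*$ since $h^*$ preserves numerical flatness, we conclude that $h_*\colon\varpi^S(X',\rho(y_0))\to\varpi^S(X,f(y_0))$ is faithfully flat. As a composite of faithfully flat homomorphisms of affine group schemes is faithfully flat, it now suffices to show that $\rho_*\colon\varpi^S(Y,y_0)\to\varpi^S(X',\rho(y_0))$ is faithfully flat.

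For $\rho$ I apply the criterion once more. Full faithfulness of $\rho^*$ is formal: for numerically flat $A,B$ on $X'$, the projection formula and $\rho_*\mathcal{O}_Y=\mathcal{O}_{X'}$ give $\mathrm{Hom}(\rho^*A,\rho^*B)=H^0(Y,\rho^*(A^\vee\otimes B))=H^0(X',(A^\vee\otimes B)\otimes\rho_*\mathcal{O}_Y)=H^0(X',A^\vee\otimes B)=\mathrm{Hom}(A,B)$. For closure under subobjects, let $E$ be numerically flat on $X'$ and let $S\subset\rho^*E$ be a numerically flat subbundle whose quotient $\rho^*E/S$ is again numerically flat. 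Restrict to a general fiber $\rho^{-1}(x')$ — a smooth connected projective variety, as one sees from generic smoothness of $f$ and \'etaleness of $h$ over a dense open. There $\rho^*E$ becomes the trivial bundle $E_{x'}\otimes\mathcal{O}$ and $S$ restricts to a numerically flat subbundle of it with numerically flat quotient; dualizing the corresponding exact sequence shows that the dual of $S|_{\rho^{-1}(x')}$ is globally generated and numerically flat, hence trivial, so $S$ restricts on every general fiber to a constant trivial subbundle $V_{x'}\otimes\mathcal{O}$ for a subspace $V_{x'}\subset E_{x'}$. The jump loci of these subspaces are closed, so over a dense open $U\subset X'$ they assemble into a subbundle $W_U\subset E|_U$ with $\rho^*W_U=S|_{\rho^{-1}(U)}$; letting $W\subset E$ be the saturated subsheaf generated by $W_U$ (using normality of $X'$) and comparing saturations in $\rho^*E$ yields $\rho^*W=S$. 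Then $W$ and $E/W$ are numerically flat (their pullbacks $S$ and $\rho^*E/S$ are, and $\rho$ is surjective), so $W$ is a subobject of $E$ with image $S$, as required.

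The step I expect to be the main obstacle is this descent along $\rho$: it rests on the rigidity fact that a numerically flat subbundle of a trivial bundle over a connected projective variety is a constant trivial subbundle, together with the routine but non-vacuous check that the resulting family of subspaces is algebraic and extends across its jump locus. One could instead argue directly with $f$, without the Stein factorization: full faithfulness of $f^*$ then follows from $f_*\mathcal{O}_Y=h_*\mathcal{O}_{X'}$ and the exact sequence $0\to\mathcal{O}_X\to h_*\mathcal{O}_{X'}\to Q\to 0$ with $\mu_{\max}(Q)<0$ (the structural consequence of genuine ramification of $h$), which gives $H^0(X,F\otimes Q)=0$ for $F$ semistable of degree zero, while closure under subobjects combines the fiberwise rigidity above with Theorem \ref{thm0}(2) for the finite part. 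Either way, the fiberwise rigidity statement is the crux.
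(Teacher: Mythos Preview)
Your approach is essentially the same as the paper's: factor $f$ via Stein factorization, handle the finite part by appealing to Theorem~\ref{thm02} and passing to the quotient $\varpi^S$, and handle the connected-fiber part by invoking the Deligne--Milne criterion. The paper's proof is considerably terser---it simply asserts that connectedness of the fibers of $\gamma$ yields faithful flatness of $\gamma_*$ via \cite[Proposition~2.21(a)]{DMOS}---whereas you unpack this step with the fiberwise rigidity and descent argument, which is correct (and indeed the substance behind the paper's assertion).
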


\section{Direct image of semistable bundles}

The base field $k$ is assumed to be algebraically closed; there is no assumption
on its characteristic.

Take an irreducible normal projective variety $X$ defined over $k$. Fix an ample line bundle $L$ on $X$;
using it, define the degree
$$
{\rm degree}(F)\, \in\,\mathbb Z
$$
of any torsionfree coherent $F$ sheaf on $X$ (see \cite[p.~13--14, Definition 1.2.11]{HL}
for the details). The \textit{slope} $\mu (F)$ of $F$ is defined to be
$$
\mu(F)\, :=\, \frac{{\rm degree}(F)}{{\rm rank}(F)}\, .
$$
The sheaf $F$ is called \textit{stable} (respectively, \textit{semistable}) if
$$
\mu(V)\, <\, \mu(F) \ \ \text{(respectively,}\ \mu(V)\, \leq\, \mu(F)\text{)}
$$
for all coherent subsheaves $V\, \subset\, F$ with $0\, <\, {\rm rank}(V)\, <\, {\rm rank}(F)$
\cite[p.~14, Definition 1.2.11]{HL}. Also, $F$ is called \textit{polystable} if it decomposes into
a direct sum of stable sheaves of same slope.

When the characteristic of the field $k$ is positive, for any variety $M$ over $k$, let
$$
F_M\,:\, M\, \longrightarrow\, M
$$
denote the absolute Frobenius morphism of $M$.

\begin{definition}\label{def1}\mbox{}
A semistable vector bundle $F$ is called a \textit{pseudo-stable vector bundle} if
$F$ admits a filtration of subbundles
$$
0\,=\, F_0\, \subsetneq\, F_1\, \subsetneq\, F_2\, \subsetneq\, \cdots \, \subsetneq\, F_{n-1}
\, \subsetneq\, F_n\,=\, F
$$
such that $F_i/F_{i-1}$ is a stable vector bundle with $\mu(F_i/F_{i-1})\,=\, \mu(F)$ for every
$1\, \leq\, i\, \leq \, n$.

A pseudo-stable vector bundle $F$ is called \textit{strongly pseudo-stable} if the vector bundle $(F^n_Z)^*F$ is
pseudo-stable for all $n\, \geq\, 1$, where $F_Z$ is the absolute Frobenius morphism of $Z$.
\end{definition}

\begin{remark}\label{rem-a}
Regarding Definition \ref{def1}, it should be mentioned that the category of semistable vector bundles on $X$ of fixed slope
is not an abelian category if $\dim X \, >\, 1$. For example, when $X$ is a surface, and $Z\,\subset\, X$ is a subscheme of
dimension zero, then there is a short exact sequence
$$
0\, \longrightarrow\, {\mathcal O}_X \, \longrightarrow\, E \, \longrightarrow\, {\mathcal I}_Z \, \longrightarrow\, 0,
$$
where ${\mathcal I}_Z\, \subset\, {\mathcal O}_X$ is the ideal sheaf of $Z$, and $E$ is a vector bundle of rank two. Note
that $E$ is semistable of degree zero.
\end{remark}

\begin{remark}\label{rb}
It should be clarified that strongly pseudo-stable bundles are precisely the lf-graded bundles in \cite{BaPa}. The terminology
``pseudo-stable'' was introduced in \cite{BG} before \cite{BaPa}.
\end{remark}

For a torsionfree sheaf $F$ on $X$, let $F_1$ be the first nonzero term of the Harder--Narasimhan filtration
of $F$; this $F_1$ is called the \textit{maximal semistable subsheaf} of $F$ \cite[p.~16, Definition 1.3.6]{HL}.
Note that when $F$ is semistable, the maximal semistable subsheaf of $F$ is $F$ itself. Define
$$
\mu_{\rm max}(F)\,:=\, \mu(F_1)\, .
$$

Let $X$ and $Y$ be irreducible normal projective varieties of common dimension $d$, and let
\begin{equation}\label{e1}
f\,\, :\,\, Y\, \longrightarrow\, X
\end{equation}
be a morphism such that
\begin{itemize}
\item $f$ is finite,

\item $f$ is generically smooth, and

\item the homomorphism between the \'etale fundamental groups induced by $f$
$$
f_*\,:\,\pi^{\rm et}_{1}(Y) \, \longrightarrow\,\pi^{\rm et}_{1}(X)
$$
is surjective.
\end{itemize}
Such a morphism $f$ is called a genuinely ramified map (see \cite{BP1}, \cite{BDP}).

Fix an ample line bundle $L$ on $X$. The pulled back line bundle $f^*L$ on $Y$ is ample, because
$f$ is a finite morphism. We will define the degree of torsionfree coherent sheaves on $Y$ (respectively, $X$)
using $f^*L$ (respectively, $L$).

We recall a lemma from \cite{BDP}.

\begin{lemma}[{\cite[Lemma 3.1]{BDP}}]\label{lem1}
For any semistable vector bundle $E$ on $Y$,
$$
\mu_{\rm max}(f_*E)\,\, \leq\,\, \frac{1}{{\rm degree}(f)}\cdot\mu(E)\, .
$$
\end{lemma}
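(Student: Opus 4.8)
The plan is to reduce the asserted inequality to the statement that the pullback of a semistable sheaf along a finite generically étale map is again semistable, and then to finish by a formal adjunction argument.

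Let $V\,\subseteq\, f_*E$ be the maximal semistable subsheaf, that is, the first nonzero term of the Harder--Narasimhan filtration of $f_*E$; then $V$ is semistable and $\mu(V)\,=\,\mu_{\rm max}(f_*E)$, so it is enough to prove ${\rm degree}(f)\cdot\mu(V)\,\leq\,\mu(E)$. Since $f_*$ is right adjoint to $f^*$, there is a natural identification ${\rm Hom}_X(V,\,f_*E)\,=\,{\rm Hom}_Y(f^*V,\,E)$, under which the inclusion $V\,\hookrightarrow\, f_*E$ corresponds to a nonzero homomorphism $\psi\,\colon\, f^*V\,\longrightarrow\, E$. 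Put $S\,:=\,\psi(f^*V)\,\subseteq\, E$; this is a nonzero torsionfree quotient of $f^*V$, and, as $E$ is semistable, $\mu(S)\,\leq\,\mu(E)$. Because $f$ is finite, pullback along $f$ multiplies the degree (taken with respect to $f^*L$ on $Y$ and $L$ on $X$) by ${\rm degree}(f)$ and leaves the rank unchanged, so $\mu(f^*V)\,=\,{\rm degree}(f)\cdot\mu(V)$ (any torsion of $f^*V$ lives in codimension $\geq 2$ and is irrelevant to slopes). Granting that $f^*V$ is semistable (equivalently, that $\mu_{\rm min}(f^*V)\,=\,\mu(f^*V)$), every nonzero quotient of $f^*V$ has slope at least $\mu(f^*V)$; applied to $S$ this gives ${\rm degree}(f)\cdot\mu(V)\,=\,\mu(f^*V)\,\leq\,\mu(S)\,\leq\,\mu(E)$, as desired.

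It remains to establish the claim that $f^*V$ is semistable whenever $V$ is semistable and torsionfree on $X$; this is the place where the hypothesis of generic smoothness of $f$ enters, and where care is needed in positive characteristic, since the trace of $f$ may vanish and averaging over fibres is then unavailable. I would argue as follows. First reduce to the Galois case: let $g\,\colon\, Z\,\longrightarrow\, X$ be the normalization of $X$ in a Galois closure of the separable field extension $k(Y)/k(X)$, with Galois group $G$. As $g$ factors as $Z\,\xrightarrow{\,h\,}\, Y\,\xrightarrow{\,f\,}\, X$ with $h$ again finite and generically étale, any subsheaf of $f^*V$ destabilizing it pulls back under $h$ to a subsheaf of $g^*V\,=\,h^*f^*V$ destabilizing $g^*V$; hence it suffices to treat $g$, i.e.\ to assume that $f$ is Galois with group $G$ and $X\,=\,Y/G$. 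For such $f$, let $q\,\colon\, f^*V\,\twoheadrightarrow\, Q$ be the minimal destabilizing quotient (the quotient by the penultimate term of the Harder--Narasimhan filtration); then $Q$ is semistable, $\mu(Q)\,=\,\mu_{\rm min}(f^*V)$, and, being canonical, $Q$ is $G$-stable and carries a $G$-linearization compatible with the tautological one on $f^*V$. Over the open set $X^\circ\,\subseteq\, X$ above which $f$ is étale (an étale $G$-torsor), faithfully flat descent produces a sheaf $\overline{Q}$ on $X^\circ$ with $f^*\overline{Q}\,=\,Q|_{f^{-1}(X^\circ)}$. Let $\overline{U}$ be the image of the composite $V\,\longrightarrow\, f_*f^*V\,\xrightarrow{f_*q}\, f_*Q$, where the first arrow is the unit of adjunction. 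One checks, by restricting to $X^\circ$ and using the triangle identities, that ${\rm rank}(\overline{U})\,=\,{\rm rank}(Q)$ and that the counit composite $f^*\overline{U}\,\longrightarrow\, f^*f_*Q\,\longrightarrow\, Q$ is an isomorphism over $f^{-1}(X^\circ)$. On the one hand $\overline{U}$ is a quotient of the semistable sheaf $V$, so $\mu(f^*\overline{U})\,=\,{\rm degree}(f)\cdot\mu(\overline{U})\,\geq\,{\rm degree}(f)\cdot\mu(V)\,=\,\mu(f^*V)$. On the other hand, since $f$ preserves codimension and torsionfree sheaves are locally free at codimension-one points, the kernel of the counit composite is supported in codimension $\geq 2$ and its cokernel is torsion, whence ${\rm degree}(f^*\overline{U})\,\leq\,{\rm degree}(Q)$ and therefore $\mu(f^*\overline{U})\,\leq\,\mu(Q)\,=\,\mu_{\rm min}(f^*V)$. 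Combining the two inequalities gives $\mu(f^*V)\,\leq\,\mu_{\rm min}(f^*V)$, which forces equality; thus $f^*V$ is semistable.

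I expect the semistability of $f^*V$ in arbitrary characteristic to be the one genuinely delicate point; the rest is formal adjunction together with slope arithmetic. If an appeal to the literature is acceptable, the fact that pullback along a finite separable morphism preserves semistability of torsionfree sheaves is known, and then the lemma drops out in a few lines from the argument of the second paragraph. I would also note that only the finiteness and generic étaleness of $f$ are used in this lemma; the surjectivity of $f_*$ on étale fundamental groups plays no role here.
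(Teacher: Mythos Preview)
The paper does not supply its own proof of this lemma; it is quoted from \cite[Lemma~3.1]{BDP}, so there is no in-paper argument to compare your attempt against. Your argument is correct and is essentially the standard one: adjunction converts the maximal semistable subsheaf $V\subset f_*E$ into a nonzero map $f^*V\to E$, and then the chain ${\rm degree}(f)\cdot\mu(V)=\mu(f^*V)\leq\mu(\psi(f^*V))\leq\mu(E)$ gives the bound, the middle inequality using that $f^*V$ is semistable.

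That last fact---preservation of semistability under pullback by a finite generically \'etale morphism---is indeed the only substantive input, and it is well known; your Galois-closure/descent sketch follows the usual route. The one step that could use an extra line is the degree comparison between $f^*\overline{U}$ and $Q$ across the branch locus: since $\overline{U}\subset f_*Q$ is torsionfree, it is locally free at all codimension-one points of $X$, hence $f^*\overline{U}$ has torsion only in codimension $\geq 2$; modulo that torsion the induced map to the torsionfree sheaf $Q$ is injective of equal rank, and its cokernel, being torsion, has nonnegative degree, so $\deg(f^*\overline{U})\leq\deg(Q)$ as you assert. Your closing remark is also on point: only finiteness and generic smoothness of $f$ enter here, not the surjectivity of $f_*$ on \'etale fundamental groups.
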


Let $E$ be a pseudo-stable vector bundle on $Y$.
Assume that $f_*E$ contains a polystable locally free subsheaf $E'$ such that
\begin{equation}\label{f2}
\mu(E')\,=\, \frac{1}{{\rm degree}(f)}\cdot\mu(E)\, .
\end{equation}
Let $E_1\, \subset\, f_*E$ be the maximal semistable subsheaf, so we have
$\mu(E_1)\,=\, \mu_{\rm max}(f_*E)$ and $E_1$ is the maximal subsheaf of $f_*E$
satisfying this equality. From Lemma \ref{lem1} and \eqref{f2} it follows that
$$
\mu_{\rm max}(f_*E)\, =\, \mu(E_1)\,=\, \frac{1}{{\rm degree}(f)}\cdot\mu(E)\, ,
$$
and $E'\, \subset\, E_1$. From
\cite[Theorem 1.2 (b)]{BP2} we know that there is a unique coherent subsheaf
\begin{equation}\label{j1}
W\, \subset\, E_1
\end{equation}
satisfying the following four conditions:
\begin{enumerate}
\item[(1)] $\mu(W)\,=\, \mu(E_1)$,

\item[(2)] $W$ is a pseudo-stable vector bundle,

\item[(3)] $E_1/W$ is torsionfree, and

\item[(4)] $W$ is maximal among all subsheaves of $E_1$ satisfying the above three conditions.
\end{enumerate}
In particular, $W$ in \eqref{j1} is locally free.

\textbf{Notation.}\, We will call the subsheaf $W$ in \eqref{j1} the \textit{pseudo-stable socle}
of $E$. This was suggested by the referee. 

\begin{proposition}\label{prop1}
Take $E$ as above. For the pseudo-stable socle $W\, \subset\, E_1$ in \eqref{j1},
$$
{\rm rank}(W)\,\,\leq\,\, {\rm rank}(E)\, .
$$
\end{proposition}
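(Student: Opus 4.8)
The plan is to study the homomorphism
$$
\psi\,:\, f^*W\, \longrightarrow\, E
$$
adjoint to the inclusion $W\, \hookrightarrow\, E_1\, \subset\, f_*E$, and to prove that $\psi$ is injective as a sheaf homomorphism; this suffices, because then ${\rm rank}(W)\,=\, {\rm rank}(f^*W)\,=\,{\rm rank}({\rm image}(\psi))\,\le\, {\rm rank}(E)$. First I would fix a filtration of subbundles $0\,=\, W_0\,\subsetneq\, W_1\,\subsetneq\,\cdots\,\subsetneq\, W_\ell\,=\, W$ with $W_i/W_{i-1}$ stable and $\mu(W_i/W_{i-1})\,=\,\mu(W)\,=\,\mu(E)/{\rm degree}(f)$, which exists since $W$ is pseudo-stable. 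Pulling it back and using \cite[Theorem 1.2]{BDP} (pullback under a genuinely ramified map of a stable bundle is stable), each $f^*(W_i/W_{i-1})$ is stable of slope $\mu(E)$; hence $f^*W$ is semistable of slope $\mu(E)$ and $\{f^*W_i\}$ is a Jordan--Hölder filtration, so the Jordan--Hölder factors of $f^*W$ are exactly the bundles $f^*(W_i/W_{i-1})$. Since $E$ is semistable of the same slope, ${\rm image}(\psi)$ has slope $\mu(E)$, and therefore $\ker(\psi)$, if nonzero, is a subsheaf of $f^*W$ of slope $\mu(E)$, hence itself semistable of slope $\mu(E)$, with Jordan--Hölder factors forming a subset of those of $f^*W$; in particular its socle contains a copy of $f^*S$ for some stable bundle $S$ on $X$ with $\mu(S)\,=\,\mu(W)$. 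I also record that for every nonzero subsheaf $V\,\subset\, W$ the restriction $\psi|_{f^*V}\,:\, f^*V\,\to\, E$ is nonzero, since its adjoint is the restriction to $V$ of the injection $W\,\hookrightarrow\, f_*E$, which is nonzero as $V\,\neq\, 0$.

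The essential input from genuine ramification is the identity $\mathrm{Hom}_X(S,\, W\otimes f_*\mathcal O_Y)\,=\,\mathrm{Hom}_X(S,\, W)$ for every stable bundle $S$ on $X$ with $\mu(S)\,=\,\mu(W)$, the right-hand side being included in the left by the unit $W\,=\, W\otimes\mathcal O_X\,\hookrightarrow\, W\otimes f_*\mathcal O_Y$. To prove it one writes $\mathrm{Hom}_X(S,\, W\otimes f_*\mathcal O_Y)\,=\, H^0(X,\, \mathcal G\otimes f_*\mathcal O_Y)$ with $\mathcal G\,=\, S^*\otimes W$ semistable of slope $0$, uses that, $f$ being genuinely ramified, the maximal semistable subsheaf of $f_*\mathcal O_Y$ is $\mathcal O_X$ (see \cite{BP1}, \cite{BDP}), so that $f_*\mathcal O_Y/\mathcal O_X$ has $\mu_{\rm max}\,<\,0$ — its slope being $\le 0$ by Lemma \ref{lem1} applied to $\mathcal O_Y$ — and deduces from the cohomology exact sequence of $0\,\to\,\mathcal G\,\to\,\mathcal G\otimes f_*\mathcal O_Y\,\to\,\mathcal G\otimes(f_*\mathcal O_Y/\mathcal O_X)\,\to\, 0$ that $H^0(X,\,\mathcal G\otimes(f_*\mathcal O_Y/\mathcal O_X))\,=\, 0$. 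Combining this identity with the projection formula $f_*f^*W\,=\, W\otimes f_*\mathcal O_Y$ and the triangle identity $\epsilon_{f^*W}\circ f^*(\eta_W)\,=\,\mathrm{id}_{f^*W}$, one gets the consequence that every homomorphism $f^*S\,\to\, f^*W$ equals $f^*(\phi')$ for a homomorphism $\phi'\,:\, S\,\to\, W$ (pass from a given $f^*S\,\to\, f^*W$ to its adjoint $S\,\to\, W\otimes f_*\mathcal O_Y$, which factors through $W\otimes\mathcal O_X$ by the identity, then take adjoints back and apply the triangle identity).

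Now the argument closes by contradiction. Suppose $\ker(\psi)\,\neq\, 0$. By the first paragraph there is a nonzero homomorphism $f^*S\,\to\, f^*W$, $S$ stable with $\mu(S)\,=\,\mu(W)$, whose image lies in $\ker(\psi)$; by the second paragraph it equals $f^*(\phi')$ for some nonzero $\phi'\,:\, S\,\to\, W$. Hence $\psi$ vanishes on the image of $f^*V\,\to\, f^*W$ for $V\,:=\,{\rm image}(\phi')\,\subset\, W$, i.e.\ $\psi|_{f^*V}\,=\, 0$; but $V$ is a nonzero subsheaf of $W$, so $\psi|_{f^*V}\,\neq\, 0$ by the end of the first paragraph — a contradiction. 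Therefore $\ker(\psi)\,=\, 0$, so $f^*W\,\hookrightarrow\, E$ and ${\rm rank}(W)\,=\,{\rm rank}(f^*W)\,\le\,{\rm rank}(E)$. The step I expect to demand the most care is the genuine-ramification identity of the second paragraph: the slope estimate ``$\mathcal G$ semistable of slope $0$ and $\mu_{\rm max}(f_*\mathcal O_Y/\mathcal O_X)\,<\, 0$ imply $\mu_{\rm max}(\mathcal G\otimes(f_*\mathcal O_Y/\mathcal O_X))\,<\, 0$'' is immediate in characteristic zero but needs an extra argument in positive characteristic, where tensor products of semistable sheaves may be unstable; there one either restricts to a general complete intersection curve (Mehta--Ramanathan) or exploits the Frobenius pullbacks built into (strong) pseudo-stability. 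One should also verify that the passages through saturations and images above alter the sheaves involved only in codimension $\ge 2$, which leaves the comparison of ranks unaffected.
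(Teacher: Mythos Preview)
Your argument is correct and takes a genuinely different route from the paper. The paper filters the pseudo-stable bundle $E$ on $Y$ by stable subquotients $F_i/F_{i-1}$, applies the descent result \cite[Theorem~3.2]{BDP} to each $f_*(F_i/F_{i-1})$ to produce stable subsheaves $W_i$ on $X$ with $f^*W_i \cong F_i/F_{i-1}$, and then bounds ${\rm rank}(W)$ by $\sum {\rm rank}(W_i)\le \sum{\rm rank}(F_i/F_{i-1})={\rm rank}(E)$. You instead filter $W$ on $X$, pull back, and prove directly that the adjunction map $\psi:f^*W\to E$ is injective. Both approaches hinge on the same genuine-ramification input (the vanishing of maps into tensors with $(f_*\mathcal O_Y)/\mathcal O_X$), but yours is more economical: once $\psi$ is injective you get Proposition~\ref{prop2} essentially for free, since an injection of vector bundles of the same rank and degree is an isomorphism.

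Two points deserve tightening. First, the claim that the socle of $\ker(\psi)$ contains a copy of some $f^*S$ needs a word about reflexive hulls: a stable subsheaf $T\subset \ker(\psi)$ injects into some $f^*(W_i/W_{i-1})$ with torsion cokernel supported in codimension $\ge 2$; since $f^*W$ is locally free (hence reflexive), the inclusion $T\hookrightarrow f^*W$ extends to $T^{**}\cong f^*(W_i/W_{i-1})\to f^*W$, and the extended map still composes to zero with $\psi$ because $E$ is torsion-free. Second, your positive-characteristic caveat is apt, but the suggested remedy via strong pseudo-stability does not apply here since $W$ is only pseudo-stable; the clean fix is exactly what the paper does, namely to invoke \cite[Lemma~4.1]{BP1} or \cite[Theorem~2.5]{BDP} directly for the vanishing ${\rm Hom}(S,\,W\otimes((f_*\mathcal O_Y)/\mathcal O_X))=0$ (filtering $W$ by its stable subquotients reduces this to the case where $W$ is stable, which is precisely the cited statement).
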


\begin{proof}
First assume that $E$ is stable. Then from \cite[Theorem 3.2]{BDP} we know that
there is a locally free subsheaf $V\, \subset\, f_*E$ such that the natural map
$f^*V\,\longrightarrow\, E$ is an isomorphism; the homomorphism
$f^*V\,\longrightarrow\, E$ is given by the inclusion map $V\, \hookrightarrow\, f_*E$
using the natural isomorphism
$$
H^0(Y,\, \text{Hom}(f^*V,\, E))\,=\, H^0(X,\, \text{Hom}(V,\, f_*E))
$$
(see \cite[p.~110]{Ha}). Note that $V$ is stable with
$\mu(V)\,=\, \frac{1}{{\rm degree}(f)}\cdot\mu(E)$ (see the first part of the
proof of \cite[Theorem 3.2]{BDP}). Consequently, we have
\begin{equation}\label{e5}
V\, \subset\, W\, ,
\end{equation}
where $W$ is the pseudo-stable socle in \eqref{j1}.

Since $f^*V\,=\, E$, from the projection formula it follows that
\begin{equation}\label{e4}
f_*E\,=\, f_*f^*V\,=\, V\otimes f_*{\mathcal O}_Y\, .
\end{equation}
We have an inclusion map
\begin{equation}\label{e6}
{\mathcal O}_X\, \hookrightarrow\, f_*{\mathcal O}_Y\, ,
\end{equation}
given by the identity map of ${\mathcal O}_Y$ using the isomorphisms
$$
H^0(X,\, {\rm Hom}({\mathcal O}_X,\, f_*{\mathcal O}_Y))\,=\,
H^0(Y,\, {\rm Hom}(f^*{\mathcal O}_X,\,
{\mathcal O}_Y))\,=\, H^0(Y,\, {\rm Hom}({\mathcal O}_Y,\,{\mathcal O}_Y))
$$
(see \cite[p.~110]{Ha}). From \eqref{e6} and \eqref{e4} it follows that
$V\, \subset\, V\otimes f_*{\mathcal O}_Y\, =\, f_*E$, and consequently,
\begin{equation}\label{e7}
(f_*E)/V\,=\, V\otimes ((f_*{\mathcal O}_Y)/{\mathcal O}_X)\, .
\end{equation}
{}From the proof of \cite[Lemma 4.1]{BP1} we know that
$$
H^0(X,\, \text{Hom}(\mathcal{W},\, V\otimes (f_*{\mathcal O}_Y)/{\mathcal O}_X))\, =\, 0
$$
for any pseudo-stable vector bundle $\mathcal W$ on $X$ with $\mu(\mathcal{W})\,=\, \mu(V)$
(see also \cite[Theorem 2.5]{BDP}).
Therefore, we have $W\, \subset\, V$. This and \eqref{e5} together imply that $V\,=\, W$;
note that any homomorphism between two vector bundles of same degree, which
is generically an isomorphism, is in fact an isomorphism. Since $f^*V\,=\, E$, we conclude that
$$
{\rm rank}(W)\,\,=\,\, {\rm rank}(V)\,\,=\,\, {\rm rank}(E)\, .
$$

Now take the vector bundle $E$ to be pseudo-stable (as in the statement of the proposition).
Let
\begin{equation}\label{e11}
0\,=\, F_0\, \subsetneq\, F_1\, \subsetneq\, F_2\, \subsetneq\, \cdots \, \subsetneq\, F_{n-1}
\, \subsetneq\, F_n\,=\, E
\end{equation}
be a filtration of subbundles
such that $F_i/F_{i-1}$ is a stable vector bundle with $\mu(F_i/F_{i-1})\,=\, \mu(E)$ for every
$1\, \leq\, i\, \leq \, n$ (see Definition \ref{def1}). Let
$$
0\,=\, f_*F_0\, \subsetneq\, f_*F_1\, \subsetneq\, f_*F_2\, \subsetneq\, \cdots \,
\subsetneq\, f_*F_{n-1}\, \subsetneq\, f_*F_n\,=\, f_*E
$$
be the corresponding filtration of $f_*E$. Take any $1\, \leq\, i\, \leq \, n$, and
consider the vector bundle $(f_*F_i)\big/(f_*F_{i-1})$. Since $f$ is a finite map, the higher
direct images vanish, and hence $(f_*F_i)\big/(f_*F_{i-1})\,=\, f_*(F_i/F_{i-1})$. From the
previous case of stable $E$ we know that one
of the following two statements holds for this $f_*(F_i/F_{i-1})$:
\begin{enumerate}
\item The vector bundle $f_*(F_i/F_{i-1})$ contains a polystable locally free subsheaf
$E'_i$ such that
$$
\mu(E'_i)\,=\, \frac{1}{{\rm degree}(f)}\cdot\mu(F_i/F_{i-1})\,=\,
\frac{1}{{\rm degree}(f)}\cdot\mu(E)\, .
$$
Then there is a stable locally free subsheaf $W_i\, \subset\, f_*(F_i/F_{i-1})$ such that
$f^*W_i\,=\, F_i/F_{i-1}$.
Furthermore,
$$
H^0(X,\, \text{Hom}({\mathcal E},\, f_*(F_i/F_{i-1})/W_i))\, =\, 0
$$
for any pseudo-stable vector bundle $\mathcal E$ on $X$ with $\mu({\mathcal E})\,=\, 
\mu(W_i)\,=\, \frac{1}{{\rm degree}(f)}\cdot\mu(E)$.

\item For every polystable locally free subsheaf
$\mathcal F$ of $f_*(F_i/F_{i-1})$,
$$
\mu({\mathcal F})\, < \, \frac{1}{{\rm degree}(f)}\cdot\mu(F_i/F_{i-1})
\,=\, \frac{1}{{\rm degree}(f)}\cdot\mu(E)\, .
$$
\end{enumerate}
If the second alternative holds for $f_*(F_i/F_{i-1})$, then set $W_i\,=\, 0$.

Therefore, we have
\begin{equation}\label{e8a}
{\rm rank}(W_i)\, \leq\, {\rm rank}(F_i/F_{i-1})\, .
\end{equation}
From the properties of $W_i$ it follows that
\begin{equation}\label{e8}
{\rm rank}(W)\, \,\leq\,\, \sum_{i=1}^n {\rm rank}(W_i)\, .
\end{equation}
Indeed, the quotient $(W\bigcap F_i)\big/(W\bigcap F_{i-1})$ has a natural injective
homomorphism to $W_i$; this evidently implies \eqref{e8}.
On the other hand,
\begin{equation}\label{e9}
\sum_{i=1}^n {\rm rank}(W_i)
\, \,\leq\,\, \sum_{i=1}^n {\rm rank}(F_i/F_{i-1})\,=\, {\rm rank}(E)
\end{equation}
(see \eqref{e8a}). The proposition follows from \eqref{e8} and \eqref{e9}.
\end{proof}

The following proposition gives a sufficient condition for a pseudo-stable vector bundle
on $Y$ to be the pullback of a vector bundle on $X$.

\begin{proposition}\label{prop2}
Take $E$ as in Proposition \ref{prop1}. Assume that
${\rm rank}(W)\,=\, {\rm rank}(E)$, where $W$ is the pseudo-stable socle in \eqref{j1}. Then
$E\,=\, f^* W$.
\end{proposition}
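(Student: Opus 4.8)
The plan is to use adjunction to build a homomorphism $\phi\, :\, f^*W\, \longrightarrow\, E$, and then to prove $\phi$ is an isomorphism by showing first that it is generically an isomorphism and afterwards invoking a degree argument. To set up $\phi$: the inclusion $\iota\, :\, W\, \hookrightarrow\, E_1\, \subset\, f_*E$ corresponds, under the isomorphism $H^0(Y,\, \text{Hom}(f^*W,\, E))\,=\, H^0(X,\, \text{Hom}(W,\, f_*E))$ (see \cite[p.~110]{Ha}), to a homomorphism $\phi\, :\, f^*W\, \longrightarrow\, E$; exactly as in the proof of Proposition \ref{prop1}, the map $\iota$ factors as the composite $W\, \hookrightarrow\, f_*f^*W\,=\, W\otimes f_*{\mathcal O}_Y\, \longrightarrow\, f_*E$, where the first arrow is the unit of adjunction (induced by ${\mathcal O}_X\, \hookrightarrow\, f_*{\mathcal O}_Y$) and the second is $f_*\phi$; in particular $\phi\,\neq\, 0$. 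I would also record that, since $W$ is pseudo-stable, it is an iterated extension of stable bundles of slope $\mu(W)$, each of which pulls back to a stable bundle of slope ${\rm degree}(f)\cdot \mu(W)\,=\, \mu(E)$ by \cite[Theorem 1.2]{BDP}, so $f^*W$ is semistable of slope $\mu(E)$; and that ${\rm rank}(W)\,=\, {\rm rank}(E)$ together with $\mu(W)\,=\, \frac{1}{{\rm degree}(f)}\cdot\mu(E)$ gives ${\rm degree}(f^*W)\,=\, {\rm degree}(f)\cdot{\rm degree}(W)\,=\, {\rm rank}(E)\cdot\mu(E)\,=\, {\rm degree}(E)$.

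The main step is to show $\phi$ is generically an isomorphism, and here the hypothesis ${\rm rank}(W)\,=\, {\rm rank}(E)$ enters through the proof of Proposition \ref{prop1}. In that proof, with the filtration $0\,=\, F_0\, \subsetneq\, \cdots\, \subsetneq\, F_n\,=\, E$ of Definition \ref{def1} and the sheaves $W_i\, \subset\, f_*(F_i/F_{i-1})$, the inequalities \eqref{e8} and \eqref{e9} become equalities; hence for every $i$ the first alternative of that proof holds, so $W_i$ is stable, $f^*W_i\,=\, F_i/F_{i-1}$, ${\rm rank}(W_i)\,=\, {\rm rank}(F_i/F_{i-1})$, and the natural injection $(W\cap f_*F_i)/(W\cap f_*F_{i-1})\, \hookrightarrow\, W_i$ has full rank. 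Since $f_*E/W$ and $f_*E/f_*F_i$ are torsionfree (the former because $E_1/W$ and $f_*E/E_1$ are), each $W\cap f_*F_i$ is saturated in $f_*E$; comparing ${\rm degree}(W)\,=\, \sum_i{\rm degree}(W_i)$ with the individual degrees then forces each of the injections above to have torsion cokernel of degree zero, hence to be an isomorphism at the generic point of $X$. At the generic point the functor $f^*$ is exact, so the sheaves $f^*(W\cap f_*F_i)$ give a filtration of $f^*W$, and $\phi$ becomes a filtered homomorphism inducing on the $i$-th graded piece the composite $f^*\big((W\cap f_*F_i)/(W\cap f_*F_{i-1})\big)\,\longrightarrow\, f^*W_i\,=\, F_i/F_{i-1}$; this composite is an isomorphism (both maps are isomorphisms there), so $\phi$ is an isomorphism at the generic point of $Y$.

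It remains to pass from generic isomorphism to isomorphism. Since ${\rm rank}(f^*W)\,=\, {\rm rank}(E)$, the homomorphism $\det\phi$ is a nonzero section of the line bundle $(\det E)\otimes(\det f^*W)^{-1}$, whose degree equals ${\rm degree}(E)-{\rm degree}(f^*W)\,=\, 0$. On the normal projective variety $Y$, polarized by the ample bundle $f^*L$, a nonzero section of a degree-zero line bundle vanishes along an effective divisor $D$ with $D\cdot(f^*L)^{d-1}\,=\, 0$, which forces $D\,=\, 0$; hence $\det\phi$ is nowhere vanishing, and therefore $\phi$ is an isomorphism, giving $E\,=\, f^*W$.

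I expect the main obstacle to be the middle paragraph: converting the equality case of the rank inequality in Proposition \ref{prop1} into the statement that $\phi$ is generically an isomorphism, since one has to keep careful track of which subsheaves are saturated, of the fact that $f^*$ is only right exact in general (which is why one works at the generic point, where $f$ is \'etale), and of the codimension-$\geq 2$ torsion that could a priori occur in the comparison maps $(W\cap f_*F_i)/(W\cap f_*F_{i-1})\, \hookrightarrow\, W_i$. The construction of $\phi$ and the concluding determinant argument are routine.
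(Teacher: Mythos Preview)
Your proposal is correct and follows essentially the same strategy as the paper: build $\phi:f^*W\to E$ by adjunction, use the equality case of the rank estimates \eqref{e8}--\eqref{e9} from Proposition~\ref{prop1} to identify the graded pieces $(W\cap f_*F_i)/(W\cap f_*F_{i-1})$ with the $W_i$, observe that $\phi$ respects the filtrations, and conclude from $f^*W_i\cong F_i/F_{i-1}$ that $\phi$ is an isomorphism. The only difference is in the last step: the paper asserts the identifications $(W\cap f_*F_i)/(W\cap f_*F_{i-1})=W_i$ globally (invoking the principle that a generically isomorphic map between vector bundles of equal degree is an isomorphism) and then deduces directly that $\phi$ is an isomorphism from the $\phi_i$ being so, whereas you more cautiously establish these identifications only at the generic point and then finish with a separate determinant argument on $Y$. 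Your route sidesteps having to check that the $W\cap f_*F_i$ are subbundles and that $f^*$ preserves the filtration exactly, at the cost of one extra (routine) paragraph.
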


\begin{proof}
The inclusion map $W\, \hookrightarrow\, f_*E$ produces a homomorphism
\begin{equation}\label{e12}
\phi\,\, :\,\, f^*W\, \longrightarrow\, E
\end{equation}
using the natural isomorphism
$$
H^0(Y,\, \text{Hom}(f^*W,\,E))\,=\, H^0(X,\, \text{Hom}(W,\,f_*E))
$$
(see \cite[p.~110]{Ha}).

Since ${\rm rank}(W)\,=\, {\rm rank}(E)$, the inequalities in 
\eqref{e8} and \eqref{e9} are actually equalities. In particular,
$$
{\rm rank}(W_i) \, \,=\,\, {\rm rank}(F_i/F_{i-1})
$$
for all $1\,\leq\, i\, \leq\, n$ (see \eqref{e9}). As mentioned before, the vector bundle
$W$ has a filtration $\{W\bigcap f_*F_i\}_{i=1}^n$ of subbundles such that graded bundle is
a subsheaf of $\bigoplus_{i=1}^n W_i$. Since ${\rm rank}(W)\,=\, \text{rank}(E)\,=\,
\sum_{i=1}^n \text{rank}(F_i/F_{i-1})$, it follows that $(W\bigcap f_*F_i)\big/(W\bigcap f_*F_{i-1})
\,=\, W_i$; here we are using the fact that any homomorphism between two vector bundles of same
degree is actually an isomorphism if it is generically an isomorphism. This filtration
$\{W\bigcap f_*F_i\}_{i=1}^n$ of $W$ pulls back to a filtration
$\{f^*(W\bigcap f_*F_i)\}_{i=1}^n$ of
$f^*W$; the corresponding graded bundle is of course $\bigoplus_{i=1}^n f^*W_i$.
Consider the filtration $\{F_i\}_{i=0}^n$ of $E$ in \eqref{e11}.
The homomorphism $\phi$ in \eqref{e12} is a homomorphism of filtered bundles, so
it produces a homomorphism
\begin{equation}\label{e13}
\phi_i\,\, :\,\, f^*W_i\, \longrightarrow\, F_i/F_{i-1}
\end{equation} 
for every $1\,\leq\, i\, \leq\, n$.
Since $F_i/F_{i-1}$ is stable, and ${\rm rank}(W_i) \, \,=\,\, {\rm rank}(F_i/F_{i-1})$,
it follows that $\phi_i$ in \eqref{e13} is an isomorphism for all
$1\,\leq\, i\, \leq\, n$ (see the first part of the proof of Proposition \ref{prop1}).

Since each $\phi_i$ in \eqref{e13} is an isomorphism, we conclude that
$\phi$ in \eqref{e12} is an isomorphism.
\end{proof}

\section{Descent of pseudo-stable subbundles}

\begin{theorem}\label{thm1}
Consider the map $f$ in \eqref{e1}. Let $E$ be a pseudo-stable vector bundle on $X$. The
following statements hold:
\begin{enumerate}
\item The pullback $f^*E$ is pseudo-stable.

\item Let $F\, \subset\, f^*E$ be any pseudo-stable subbundle such that $\mu(F)
\,=\, \mu(f^*E)$. Then there is a pseudo-stable subbundle $V\, \subset\, E$ such that
the subbundle
$$
f^*V\, \subset\, f^*E
$$
coincides with the subbundle $F\, \subset\, f^*E$.
\end{enumerate}
\end{theorem}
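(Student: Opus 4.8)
The plan is to prove (1) and (2) together by induction on the length $\ell$ of a filtration $0=E_0\subsetneq E_1\subsetneq\cdots\subsetneq E_\ell=E$ as in Definition \ref{def1}, i.e. with each $E_i/E_{i-1}$ stable of slope $\mu(E)$. Statement (1) is the easy half and is logically independent of (2): since degrees get multiplied by ${\rm degree}(f)$ under $f^*$, pulling back such a filtration gives a filtration of $f^*E$ whose successive quotients are the $f^*(E_i/E_{i-1})$, each stable of slope $\mu(f^*E)$ by \cite[Theorem 1.2]{BDP}; an iterated extension of semistable sheaves of a common slope is semistable, so $f^*E$ is pseudo-stable. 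For the base case $\ell=1$ of (2), the bundle $f^*E$ is stable, so a pseudo-stable subbundle $F\subseteq f^*E$ with $\mu(F)=\mu(f^*E)$ must be $0$ or $f^*E$ (a proper subsheaf of positive rank of a stable bundle has strictly smaller slope), and we take $V=0$ or $V=E$.

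For the inductive step, write $Q:=E/E_{\ell-1}$ (stable, $\mu(Q)=\mu(E)$), and let $F\subseteq f^*E$ be a pseudo-stable subbundle with $\mu(F)=\mu(f^*E)$. I would first record the elementary fact that a subbundle $B'\subseteq B$ of a pseudo-stable bundle with $\mu(B')=\mu(B)$ is again pseudo-stable: intersecting $B'$ with a filtration $\{B_i\}$ of $B$ as in Definition \ref{def1}, each graded piece $(B'\cap B_i)/(B'\cap B_{i-1})$ injects into the stable bundle $B_i/B_{i-1}$, and comparing $\mu(B')$ with the rank-weighted average of the slopes of these pieces forces each of them to be $0$ or all of $B_i/B_{i-1}$, so $\{B'\cap B_i\}$ is a filtration of $B'$ as in Definition \ref{def1}. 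Now put $F':=F\cap f^*E_{\ell-1}$. The inclusion of the quotient $F/F'$ of $F$ into the stable bundle $f^*Q$ of slope $\mu(f^*E)$ shows $F/F'$ is $0$ or $f^*Q$. If it is $0$, then $F\subseteq f^*E_{\ell-1}$ is a subbundle of $f^*E_{\ell-1}$ (its quotient is the kernel of the surjection $f^*E/F\to f^*Q$ of locally free sheaves), and the inductive hypothesis for $E_{\ell-1}$ produces a pseudo-stable subbundle $V\subseteq E_{\ell-1}\subseteq E$ with $f^*V=F$; it is a subbundle of $E$ since $E_{\ell-1}$ is.

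The case $F/F'=f^*Q$ is the heart of the matter. Here $F'=F\cap f^*E_{\ell-1}$ is a subbundle of $f^*E_{\ell-1}$ (its quotient is $f^*E/F$) of slope $\mu(f^*E)$, hence pseudo-stable by the fact above, so the inductive hypothesis gives a pseudo-stable subbundle $V'\subseteq E_{\ell-1}$ with $f^*V'=F'$. Set $C:=E_{\ell-1}/V'$, a locally free semistable sheaf of slope $\mu(E)$. From $F+f^*E_{\ell-1}=f^*E$ and $F\cap f^*E_{\ell-1}=f^*V'$ one gets a surjection $q\colon f^*E\twoheadrightarrow f^*C$ with $\ker q=F$, which restricted to $f^*E_{\ell-1}$ is the pullback of the quotient map $E_{\ell-1}\to C$, hence is already defined over $X$. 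The point is then to show $q$ itself descends to some $\bar q\colon E\to C$: extending the $X$-defined map $E_{\ell-1}\to C$ to an $X$-defined map $E\to C$ whose pullback is $q$ is obstructed by a class in ${\rm Ext}^1_X(Q,C)$ and has ambiguity ${\rm Hom}_X(Q,C)$, and the comparison maps ${\rm Hom}_X(Q,C)\to{\rm Hom}_Y(f^*Q,f^*C)$ and ${\rm Ext}^1_X(Q,C)\to{\rm Ext}^1_Y(f^*Q,f^*C)$ are respectively surjective and injective because their failure is governed by $H^0(X,{\rm Hom}(Q,C)\otimes((f_*\mathcal O_Y)/\mathcal O_X))$, which vanishes by the genuinely-ramified vanishing recalled in the excerpt (\cite[Lemma 4.1]{BP1}; see also \cite[Theorem 2.5]{BDP}), $Q$ being stable and $C$ semistable of the same slope. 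Once $q=f^*\bar q$, the equality $f^*({\rm coker}\,\bar q)={\rm coker}\,q=0$ forces ${\rm coker}\,\bar q=0$ (a coherent sheaf annihilated by $f^*$ vanishes, as $f$ is surjective and all residue fields equal $k$), so $\bar q$ is surjective; then $V:=\ker\bar q\subseteq E$ is a subbundle ($C$ being locally free), $f^*V=\ker q=F$ as subbundles of $f^*E$, and $V$ is pseudo-stable by the fact above. This closes the induction.

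The only real obstacle is this last descent in the case $F/F'=f^*Q$; everything else is bookkeeping with filtrations plus the routine principle that a generically isomorphic homomorphism between torsionfree sheaves of equal degree is an isomorphism. The one input that the genuine ramification (surjectivity of $f_*$ on $\pi^{\rm et}_1$) is used for is the vanishing of ${\rm Hom}$ into $(-)\otimes((f_*\mathcal O_Y)/\mathcal O_X)$ against pseudo-stable or semistable sheaves of the matching slope; a small point to check is that the form needed above, with the semistable sheaf $C$ as target, follows from the stable-target statement quoted in the excerpt by a d\'evissage along a Jordan--H\"older filtration of $C$.
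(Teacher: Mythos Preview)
Your argument is correct and takes a genuinely different route from the paper's. Both proofs of part~(1) are identical. For part~(2), the paper works entirely on the direct-image side: it applies the machinery of Propositions~\ref{prop1} and~\ref{prop2} to each of $F$, $f^*E$, and $(f^*E)/F$, producing maximal pseudo-stable subsheaves $V_1\subset f_*F$, $V_2\subset f_*f^*E$, $V_3\subset f_*((f^*E)/F)$ with an exact sequence $0\to V_1\to V_2\to V_3$; the inclusion $E\subset V_2$ together with the rank bound of Proposition~\ref{prop1} forces ${\rm rank}(V_1)={\rm rank}(F)$, and Proposition~\ref{prop2} then gives $f^*V_1=F$. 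Your approach instead inducts on the length of a filtration of $E$ and reduces to descending a single surjection $q\colon f^*E\to f^*C$ through a comparison of ${\rm Hom}$ and ${\rm Ext}^1$ under $f^*$, both governed by $H^0(X,\mathrm{Hom}(Q,C)\otimes (f_*\mathcal O_Y)/\mathcal O_X)$. The key external input---the vanishing coming from genuine ramification (\cite[Lemma~4.1]{BP1}, \cite[Theorem~2.5]{BDP})---is the same in both proofs, but the paper packages it inside Proposition~\ref{prop1} while you invoke it directly. The paper's route yields the auxiliary Propositions~\ref{prop1}--\ref{prop2} (of independent interest: a criterion for a pseudo-stable bundle on $Y$ to descend), whereas yours is more self-contained and avoids analyzing the Harder--Narasimhan structure of $f_*E$.

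Two small points deserve a line of justification in your write-up. First, the dichotomy $F/F'\in\{0,f^*Q\}$: a priori $F/F'$ could be a full-rank, equal-degree proper subsheaf of $f^*Q$; to rule this out, run the map $F\to f^*Q$ through a pseudo-stable filtration of $F$ and use that a nonzero map between stable \emph{vector bundles} of equal slope is an isomorphism (this is the locally free form of the ``generically isomorphic $\Rightarrow$ isomorphic'' principle you mention). Second, in your d\'evissage for the vanishing with target $C$, the Jordan--H\"older graded pieces of $C$ are a priori only torsion-free; either note that the cited vanishing does not require the stable target to be locally free, or observe that $C=E_{\ell-1}/V'$ is in fact pseudo-stable (the image in $C$ of the filtration $\{E_i\}$ has successive quotients that are quotients of the stable bundles $E_i/E_{i-1}$ of the same slope, hence $0$ or all of $E_i/E_{i-1}$).
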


\begin{proof}
Let
$$
0\,=\, E_0\, \subsetneq\, E_1\, \subsetneq\, E_2\, \subsetneq\, \cdots \, \subsetneq\, E_{n-1}
\, \subsetneq\, E_n\,=\, E
$$
be a filtration of subbundles such that $E_i/E_{i-1}$ is a stable vector bundle with
$\mu(E_i/E_{i-1})\,=\, \mu(E)$ for every
$1\, \leq\, i\, \leq \, n$ (see Definition \ref{def1}). Consider the filtration of
subbundles
$$
0\,=\, f^*E_0\, \subsetneq\, f^*E_1\, \subsetneq\, f^*E_2\, \subsetneq\, \cdots \,
\subsetneq\, f^*E_{n-1}\, \subsetneq\, f^*E_n\,=\, f^*E\, .
$$
For every $1\, \leq\, i\, \leq \, n$, the quotient bundle $f^*(E_i/E_{i-1})\,=\
(f^*E_i)\big/(f^*E_{i-1})$ is stable because $E_i/E_{i-1}$ is so \cite[Theorem 1.2]{BDP}.
Also, we have
$$
\mu((f^*E_i)\big/(f^*E_{i-1}))\,=\, \text{degree}(f)\cdot \mu(E_i/E_{i-1})\,=\,
\text{degree}(f)\cdot \mu(E)\,=\, \mu(f^*E)\, .
$$
Consequently, the vector bundle $f^*E$ is pseudo-stable. This proves the first statement.

To prove the second statement, take any pseudo-stable subbundle
$$F\, \subset\, f^*E$$
such that $\mu(F)\,=\, \mu(f^*E)$. So $(f^*E)/F$ is also
pseudo-stable with $\mu((f^*E)/F)\,=\, \mu(f^*E)$. Therefore, 
we have the short exact sequence of
pseudo-stable bundles of same slope
$$
0\, \longrightarrow\, F \, \longrightarrow\,f^*E \, \longrightarrow\, (f^*E)/F
\, \longrightarrow\, 0\, .
$$
It produces the short exact sequence of vector bundles
\begin{equation}\label{e14}
0\, \longrightarrow\, f_*F \, \longrightarrow\,f_*f^*E\,=\, E\otimes f_*{\mathcal O}_Y
\, \longrightarrow\, f_*((f^*E)/F)\,=\, (E\otimes f_*{\mathcal O}_Y)/(f_*F)
\, \longrightarrow\, 0
\end{equation}
on $X$; the higher direct images vanish because the map
$f$ is finite. We will construct pseudo-stable locally free
subsheaves $V_1$, $V_2$ and $V_3$ of $f_*F$, $f_*f^*E$
and $f_*((f^*E)/F)$ respectively.

Let $B\, \longrightarrow\, Y$ be a pseudo-stable vector bundle. 
First assume that $f_*B$ contains a polystable locally free subsheaf $B'$ such that
$$
\mu(B')\,\,=\,\, \frac{1}{{\rm degree}(f)}\cdot\mu(B)\, .
$$
Let $M_B\, \subset\, f_*B$ be the unique maximal semistable subsheaf.
So from Lemma \ref{lem1} it follows that
$$
\mu_{\rm max}(f_*B)\, =\, \mu(M_B)\,=\, \frac{1}{{\rm degree}(f)}\cdot\mu(B)\, .
$$
From \cite[Theorem 1.2]{BP2} we know that there is a unique coherent subsheaf
$$
W_B\, \subset\, M_B
$$
satisfying the following four conditions:
\begin{enumerate}
\item[(1)] $\mu(W_B)\,=\, \mu(M_B)$,

\item[(2)] $W_B$ is a pseudo-stable vector bundle,

\item[(3)] $M_B/W_B$ is torsionfree, and

\item[(4)] $W_B$ is maximal among all subsheaves of $M_B$ satisfying the above three conditions.
\end{enumerate}

Now set $V_B\,=\, W_B$.

If
$$
\mu(B')\,<\, \frac{1}{{\rm degree}(f)}\cdot\mu(B)
$$
for all polystable locally free subsheaves $B'\, \subset\, f_*B$, then set
$V_B\,=\, 0$.

Substituting $F$, $f^*E$ and $(f^*E)/F$ in place of $B$ in the above construction of
$V_B$, we get locally free pseudo-stable subsheaves $$V_1\,:=\, V_F,\ V_2\,:=\, V_{f^*E}\ \text{ and }\
V_3\,:=\, V_{(f^*E)/F}$$ of $f_*F$, $f_*f^*E$
and $f_*((f^*E)/F)$ respectively. Note that we have an exact sequence
\begin{equation}\label{e15}
0\, \longrightarrow\, V_1 \, \longrightarrow\, V_2 \, \longrightarrow\, V_3\, .
\end{equation}
It should be clarified that \eqref{e15} does not assert the surjectivity of the map
$V_2 \, \longrightarrow\, V_3$.

Since $E\, \hookrightarrow\, E\otimes f_*{\mathcal O}_Y\,=\, f_*f^*E$ (see \eqref{e6}), we
have $E\, \subset\, V_2$. Hence from Proposition \ref{prop1} it follows that
${\rm rank}(E)\,=\, {\rm rank}(V_2)$. In view of this equality, from \eqref{e15} and
Proposition \ref{prop1} we conclude that
$$
{\rm rank}(V_1)\,=\, {\rm rank}(F)\,\ \ \text{ and }\,\ \
{\rm rank}(V_3)\,=\, {\rm rank}((f^*E)/F)\, .
$$
Now from Proposition \ref{prop2} it follows that $f^*V_1\,=\, F$ as subbundles of
$f^*E$.
\end{proof}

\begin{remark}\label{rem2}
If $W$ is a pseudo-stable vector bundle, and $F\, \subset\, W$ is a subbundle
such that $\mu(W)\,=\, \mu(F)$, then it can be shown that $F$ is also pseudo-stable.
\end{remark}

Let $Z$ be a normal projective variety defined over $k$. Fix a point $z_0\, \in\, Z$,
and also fix an ample line bundle on $Z$. 
Consider the category of strongly pseudo-stable vector bundles on $Z$ of degree zero; the morphisms 
from $E$ to $F$ are all ${\mathcal O}_Z$--linear homomorphisms $E\, \longrightarrow\, F$. 
It is a neutral Tannakian category; the fiber functor sends a strongly pseudo-stable vector bundle 
$E\, \longrightarrow\, Z$ of degree zero to its fiber $E_{z_0}$ over $z_0$ \cite[Theorem 1]{BaPa} (see also 
\cite{BPS}). Let $\varpi(Z,\, z_0)$ denote the proalgebraic group scheme over $k$ defined
by this neutral Tannakian category.

\begin{remark}\label{rem1}
In \cite{BaPa}, in the definition of $\varpi(Z,\, z_0)$ the variety $Z$ is assumed to be smooth. However
the construction of $\varpi(Z,\, z_0)$ generalizes to any normal projective variety $Z$. Given a
strongly pseudo-stable vector bundle $E$ on $Z$, and a (smooth) point $z\,\in\, Z$, the restriction of $E$ to
the curves passing through $z$ are considered in the construction of \cite{BaPa}. If $z$ is a singular point
of $Z$, consider the normalization $\widehat{Z}$ of the blow-up of $Z$ at the point $z$. Now instead
of curves on $Z$ passing through $z$ we need to consider curves on $\widehat Z$ passing through a smooth
point of $\widehat{Z}$ lying over $z$.
\end{remark}

\begin{theorem}\label{thm3a}
Take $f\, :\, Y\, \longrightarrow\, X$ as in \eqref{e1}. Fix a point
$y_0\, \in\, Y$ and an ample line bundle on $X$. Then the homomorphism
$$
f_*\,\, :\,\, \varpi(Y,\, y_0)\, \longrightarrow\, \varpi(X,\, f(y_0))
$$
induced by $f$ is faithfully flat (in other words, $f$ is surjective).
\end{theorem}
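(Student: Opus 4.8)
The plan is to use the Tannakian formalism: to prove that the homomorphism of affine group schemes $f_*\,:\,\varpi(Y,\,y_0)\,\longrightarrow\,\varpi(X,\,f(y_0))$ is faithfully flat, by \cite[Proposition 2.21(a)]{DMOS} it suffices to verify two things about the pullback functor $f^*$ on the corresponding Tannakian categories. Write $\mathcal{C}_X$ (respectively, $\mathcal{C}_Y$) for the category of strongly pseudo-stable vector bundles of degree zero on $X$ (respectively, $Y$), with $f^*\,:\,\mathcal{C}_X\,\longrightarrow\,\mathcal{C}_Y$ the pullback tensor functor. The two conditions are: (i) $f^*$ is fully faithful, and its essential image is closed under taking subobjects in $\mathcal{C}_Y$; equivalently (ii) every subobject of $f^*E$ in $\mathcal{C}_Y$, for $E\,\in\,\mathcal{C}_X$, is the image under $f^*$ of a subobject of $E$. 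First I would check that $f^*$ is a well-defined tensor functor $\mathcal{C}_X\,\longrightarrow\,\mathcal{C}_Y$: by Theorem \ref{thm1}(1) the pullback of a pseudo-stable bundle is pseudo-stable, degree zero is preserved (as $\mathrm{degree}(f^*E)\,=\,\mathrm{degree}(f)\cdot\mathrm{degree}(E)\,=\,0$), pullback commutes with the absolute Frobenius up to isomorphism, so strong pseudo-stability is preserved, and pullback is compatible with tensor products and duals. Compatibility of $f^*$ with the fiber functors at $y_0$ and $f(y_0)$ is automatic since $(f^*E)_{y_0}\,=\,E_{f(y_0)}$.

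Next I would establish full faithfulness of $f^*$. Faithfulness is clear. For fullness, given $E,E'\,\in\,\mathcal{C}_X$ and a homomorphism $\psi\,:\,f^*E\,\longrightarrow\,f^*E'$ in $\mathcal{C}_Y$, I want to descend it to a homomorphism $E\,\longrightarrow\,E'$ on $X$. The standard trick is to apply Theorem \ref{thm1}(2) to the graph of $\psi$: consider the vector bundle $f^*(E\oplus E')\,=\,f^*E\oplus f^*E'$ on $Y$ and the subbundle $F\,:=\,\{(v,\psi(v))\}\,\subset\,f^*E\oplus f^*E'$, the graph of $\psi$. Because $\psi$ is a morphism of pseudo-stable bundles of the same slope (here using that degree-zero pseudo-stable bundles all have slope zero, so the kernel and image of $\psi$ have slope zero), $F$ is pseudo-stable with $\mu(F)\,=\,\mu(f^*(E\oplus E'))\,=\,0$. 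Hmm, one must be slightly careful: $\psi$ need not be injective as a bundle map, so $F$ should be taken as the saturation, or one argues with the graph of $\psi$ restricted appropriately; in any case $F$ is a pseudo-stable subbundle of the right slope. By Theorem \ref{thm1}(2) there is a pseudo-stable subbundle $V\,\subset\,E\oplus E'$ with $f^*V\,=\,F$. Projecting $V$ to $E$ and to $E'$ and using that $f^*$ of the first projection is an isomorphism $F\,\xrightarrow{\sim}\,f^*E$ (since the graph projects isomorphically onto its source, at least generically, hence everywhere as the degrees agree), one recovers a homomorphism $E\,\longrightarrow\,E'$ on $X$ whose pullback is $\psi$. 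This gives fullness.

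Then I would prove the essential image is closed under subobjects, which is essentially a restatement of Theorem \ref{thm1}(2): if $E\,\in\,\mathcal{C}_X$ and $F\,\subset\,f^*E$ is a subobject in $\mathcal{C}_Y$, then $F$ is strongly pseudo-stable of degree zero; in particular $F$ is pseudo-stable with $\mu(F)\,=\,0\,=\,\mu(f^*E)$, so Theorem \ref{thm1}(2) provides a pseudo-stable subbundle $V\,\subset\,E$ with $f^*V\,=\,F$. It remains to check that $V$ itself lies in $\mathcal{C}_X$, i.e. that $V$ is strongly pseudo-stable of degree zero. Degree zero follows from $\mathrm{degree}(f)\cdot\mathrm{degree}(V)\,=\,\mathrm{degree}(f^*V)\,=\,\mathrm{degree}(F)\,=\,0$. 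For strong pseudo-stability, I would argue that $(F^n_X)^*V$ pulls back under $f$ to $(F^n_Y)^*F$ (using $f\circ F^n_Y\,=\,F^n_X\circ f$), which is pseudo-stable since $F$ is strongly pseudo-stable; then one needs that a bundle on $X$ whose pullback to $Y$ is pseudo-stable is itself pseudo-stable. In characteristic zero this subtlety is empty (Frobenius is the identity). In positive characteristic I would deduce it from the descent machinery already in place: the pullback of $(F^n_X)^*V$ sits inside the pullback of $(F^n_X)^*E$, and $(F^n_X)^*E$ is pseudo-stable (again $E$ strongly pseudo-stable), so by Remark \ref{rem2} applied on $Y$ together with semistability descent along the genuinely ramified $f$ (pseudo-stability of $f^*$ of a bundle forces at least semistability of the bundle, and the pseudo-stable filtration descends by repeated application of Theorem \ref{thm1}(2)), one concludes $(F^n_X)^*V$ is pseudo-stable for all $n$.

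The main obstacle I anticipate is the last point: verifying that the descended subbundle $V$ — and, in the fullness argument, the descended homomorphism's source and target — genuinely lie in the Tannakian category $\mathcal{C}_X$, i.e. the stability of strong pseudo-stability under descent along $f$ in positive characteristic. Theorem \ref{thm1} only addresses pseudo-stability, not the Frobenius-pullback condition, so I would need a small supplementary lemma: \emph{if $f$ is genuinely ramified and $f^*B$ is pseudo-stable, then $B$ is pseudo-stable}. This should follow because $f^*$ of the Harder--Narasimhan filtration of $B$ computes the HN filtration of $f^*B$ (pullback along a finite map preserves semistability and hence HN filtrations up to the degree scaling), so pseudo-stability of $f^*B$ forces $B$ semistable; and then the stable pieces of a pseudo-stable filtration of $f^*B$ descend via Theorem \ref{thm1}(2) (or rather, one descends the filtration step by step). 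Granting this lemma, applying it to $B\,=\,(F^n_X)^*V$ and using $f^*(F^n_X)^*V\,\cong\,(F^n_Y)^*f^*V\,=\,(F^n_Y)^*F$, which is pseudo-stable, completes the verification. With all three conditions (tensor functor, fully faithful, image closed under subobjects) checked, \cite[Proposition 2.21(a)]{DMOS} yields that $f_*$ is faithfully flat, which is the assertion of Theorem \ref{thm3a}.
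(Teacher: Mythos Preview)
Your approach is essentially the paper's: both invoke \cite[Proposition~2.21(a)]{DMOS} and verify the second condition (closure of the essential image under subobjects) via Theorem~\ref{thm1}(2). The paper dispatches the first condition by citing \cite[Lemma~4.3]{BP1}, whereas you unwind it with the graph argument; that is fine and amounts to the same thing.

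Your worry about strong pseudo-stability of the descended subbundle $V$ is legitimate, but your proposed resolution (a supplementary lemma that pseudo-stability descends along $f$, proved by descending a filtration of $f^*B$ via Theorem~\ref{thm1}(2)) is both unnecessary and slightly circular: Theorem~\ref{thm1}(2) presupposes that the base bundle on $X$ is already pseudo-stable, which is exactly what you would be trying to establish for $B=(F^n_X)^*V$. The direct route, which is presumably what the paper means by ``immediately'', is to apply Remark~\ref{rem2} on $X$: since $V\subset E$ with $\mu(V)=\mu(E)$ and $E$ is strongly pseudo-stable, for every $n$ one has $(F^n_X)^*V\subset(F^n_X)^*E$ with equal slope and $(F^n_X)^*E$ pseudo-stable, so Remark~\ref{rem2} gives $(F^n_X)^*V$ pseudo-stable. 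No passage through $Y$ or through $f$ is needed for this step.
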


\begin{proof}
We use the criterion in \cite[p.~139, Proposition 2.21 (a)]{DMOS} for a homomorphism
between two affine group schemes over $k$ to be faithfully flat. From \cite[Lemma 4.3]{BP1}
we know that the first condition in \cite[Proposition 2.21 (a)]{DMOS} is satisfied
for the homomorphism $f_*\, :\, \varpi(Y,\, y_0)\, \longrightarrow\, \varpi(X,\, f(y_0))$. From
Theorem \ref{thm1}(2) it follows immediately that the second condition in
\cite[Proposition 2.21 (a)]{DMOS} is satisfied for $f_*$. Consequently, $f_*$ is faithfully flat
by \cite[p.~139, Proposition 2.21 (a)]{DMOS}.
\end{proof}

\subsection{Surjective generically smooth maps}

As before, let $Z$ be a normal projective variety defined over $k$. Fix a point $z_0\, \in\, Z$.
The category of strongly pseudo-stable vector bundles on $Z$ of degree zero has a full subcategory defined
by the strongly pseudo-stable vector bundles $E\, \longrightarrow\, Z$ such that $c_i(E)$ is numerically
equivalent to zero for all $i\, \geq\, 1$. While the definition of pseudo-stable vector bundles on $Z$ requires
a choice of polarization on $Z$, this subcategory defined by the
strongly pseudo-stable vector bundles $E\, \longrightarrow\, Z$ with $c_i(E)$ numerically
equivalent to zero for all $i\, \geq\, 1$ is actually independent of the choice of polarization on $Z$ (see
\cite{La}, \cite{BH}). Moreover, it is again a neutral Tannakian category. Let 
$\varpi^S(Z,\, z_0)$ denote the proalgebraic group scheme over $k$ defined
by this neutral Tannakian category. Note that $\varpi^S(Z,\, z_0)$ is a quotient of
$\varpi(Z,\, z_0)$.

Let $\beta\,:\, Z\, \longrightarrow\, Z'$ be a morphism of normal projective varieties. This induces
a homomorphism
$$
\beta_*\,:\, \varpi^S(Z,\, z_0)\, \longrightarrow\, \varpi^S(Z',\, \beta(z_0))
$$
which is constructed by sending any strongly pseudo-stable vector bundle $E\, \longrightarrow\, Z$, such that $c_i(E)$ is
numerically equivalent to zero for all $i\, \geq\, 1$, to the pullback $\beta^*E$.

Let $X$ and $Y$ be irreducible normal projective varieties with $\dim Y\, \geq\, \dim X$,
and let
\begin{equation}\label{e1a}
f\,\, :\,\, Y\, \longrightarrow\, X
\end{equation}
be a morphism such that
\begin{itemize}
\item $f$ is surjective,

\item $f$ is generically smooth, and

\item the homomorphism between the \'etale fundamental groups induced by $f$
$$
f_*\,:\,\pi^{\rm et}_{1}(Y) \, \longrightarrow\,\pi^{\rm et}_{1}(X)
$$
is surjective.
\end{itemize}

\begin{proposition}\label{prop3}
Fix a point $y_0\, \in\, Y$. Then the homomorphism
$$
f_*\,\, :\,\, \varpi^S(Y,\, y_0)\, \longrightarrow\, \varpi^S(X,\, f(y_0))
$$
induced by $f$ in \eqref{e1a} is faithfully flat.
\end{proposition}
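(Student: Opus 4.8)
\textbf{Proof proposal for Proposition \ref{prop3}.}

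The plan is to reduce the statement to Theorem \ref{thm3a} by factoring $f$ through a genuinely ramified map between varieties of the same dimension. First I would verify the faithful-flatness criterion of \cite[p.~139, Proposition 2.21 (a)]{DMOS} directly, exactly as in the proof of Theorem \ref{thm3a}: one needs (i) that every object of the Tannakian category on $X$ (i.e.\ every strongly pseudo-stable $E$ on $X$ with $c_i(E)\equiv 0$) injects, as a subobject, into something of the form $f^*F$ for some object $F$ upstairs — equivalently the pullback functor has the property that every $E$ is a subquotient of a pullback; and (ii) that any subobject of a pullback $f^*E$ which is itself an object of the category on $Y$ descends to a subobject of $E$. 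Condition (i) for the $\varpi^S$ situation is the content of \cite[Lemma 4.3]{BP1} applied to $f$ (the argument there only uses surjectivity of $f$ and of $f_*$ on \'etale fundamental groups, not equality of dimensions), so I would just cite it. The real work is condition (ii).

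The key step is therefore: given a strongly pseudo-stable $E$ on $X$ with numerically trivial Chern classes, and a subbundle $F\subset f^*E$ with $\mu(F)=\mu(f^*E)=0$ which lies in the $\varpi^S$-category on $Y$ (so $c_i(F)\equiv 0$ too), produce a subbundle $W\subset E$ with $f^*W=F$. To handle the case $\dim Y>\dim X$ I would pass to a complete intersection curve argument, or better: use the fact that the category of bundles with numerically trivial Chern classes is polarization-independent together with a Lefschetz-type reduction. Concretely, I would choose a general complete intersection surface (or curve) $i\colon S\hookrightarrow Y$ cut out by sufficiently ample divisors such that $f|_S\colon S\to f(S)\subset X$ is again finite, generically smooth, and induces a surjection on \'etale $\pi_1$ (possible by Bertini-type theorems for $\pi_1$, e.g.\ using \cite{BDP}-style arguments), and such that restriction to $S$ is fully faithful on the relevant Tannakian categories — this last point is where numerical triviality of Chern classes is essential, since it is what makes restriction to a general complete intersection reflect the bundle. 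Then $f|_S$ is genuinely ramified between varieties of equal dimension, Theorem \ref{thm1}(2) applies to give a descended subbundle $W_S\subset E|_{f(S)}$, and I would spread $W_S$ back out over $X$ using again that a bundle with numerically trivial Chern classes is determined by its restriction to a general complete intersection of the right dimension.

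The main obstacle I anticipate is precisely this spreading-out / full-faithfulness of restriction to a general complete intersection: one must argue that the subbundle $W_S$ of $E|_{f(S)}$ is the restriction of a genuine subbundle $W\subset E$ (not merely a subsheaf, and globally defined on all of $X$), and that $f^*W=F$ as subbundles of $f^*E$ — for the latter it suffices to check the two subbundles $f^*W$ and $F$ of $f^*E$ agree after restriction to $S$, which they do by construction, and then invoke that two subbundles of a fixed bundle with numerically trivial Chern classes agreeing on a general complete intersection must agree. I would cite \cite{La} and \cite{BH} for the polarization-independence and the relevant restriction theorems, and possibly reduce all the way to curves, where the statement is cleanest. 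If a clean Lefschetz reduction is not available in the stated generality, the fallback is to run the $f_*$-and-Harder--Narasimhan machinery of Propositions \ref{prop1} and \ref{prop2} directly on $f$, checking that Lemma \ref{lem1} and the subsheaf-descent result \cite[Theorem 1.2]{BP2} only require $f$ finite onto its image after the complete-intersection cut, so that the numerical-triviality hypothesis is what carries the conclusion from the cut back to $X$ and $Y$.
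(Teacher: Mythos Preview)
Your opening sentence --- factor $f$ through a genuinely ramified map between varieties of the same dimension --- is exactly the paper's idea, but you then abandon it for a much harder route.  The paper simply takes the Stein factorization $f=\delta\circ\gamma$ with $\gamma\colon Y\to Y_1$ having connected fibres and $\delta\colon Y_1\to X$ finite.  Both $\gamma$ and $\delta$ are generically smooth since $f$ is; connected fibres make $\gamma_*$ surjective on $\pi_1^{\mathrm{et}}$, so $\delta_*$ is surjective on $\pi_1^{\mathrm{et}}$ as well, hence $\delta$ is genuinely ramified and Theorem~\ref{thm3a} gives that $\delta_*$ is faithfully flat on $\varpi$, hence on its quotient $\varpi^S$.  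For $\gamma$ one checks the criterion of \cite[Proposition~2.21(a)]{DMOS} directly using only that $\gamma$ has connected fibres (so $\gamma_*{\mathcal O}_Y={\mathcal O}_{Y_1}$ and pullback of bundles is fully faithful with subobjects descending).  The composition $f_*=\delta_*\circ\gamma_*$ is then faithfully flat.

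Your complete-intersection approach, by contrast, runs into precisely the obstacle you flag, and that obstacle is genuine: extending a subbundle $W_S\subset E|_{f(S)}$ to a subbundle of $E$ over all of $X$ is not a standard Lefschetz statement --- Lefschetz-type theorems control $\pi_1$, $\mathrm{Pic}$, and cohomology, not arbitrary subbundles of a fixed bundle --- and neither \cite{La} nor \cite{BH} supplies such a result.  Even the preliminary steps (arranging $f|_S$ to be \emph{finite} rather than generically finite, $f(S)$ to be normal, and $f|_S$ to be surjective on $\pi_1^{\mathrm{et}}$) are delicate.  The Stein factorization avoids all of this in two lines.
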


\begin{proof}
Let $$Y\, \stackrel{\gamma}{\longrightarrow}\, Y_1 \, \stackrel{\delta}{\longrightarrow}\, X$$ be
the Stein factorization of $f$. Both $\gamma$ and $\delta$ are generically smooth because $f$ is so. Let
$$
\gamma_*\,:\, \varpi^S(Y,\, y_0)\, \longrightarrow\, \varpi^S(Y_1,\, \gamma(y_0))\,\ \text{ and }\, \
\delta_*\,:\, \varpi^S(Y_1,\, \gamma(y_0))\, \longrightarrow\, \varpi^S(X,\, f(y_0))
$$
be the homomorphisms induced by $\gamma$ and $\delta$ respectively. Since $\varpi^S(Z,\, z_0)$
is a quotient of $\varpi(Z,\, z_0)$, from Theorem \ref{thm3a} we know that $\delta_*$ is faithfully
flat. Since the fibers of $\gamma$ are connected, the homomorphism $\gamma_*$ is faithfully flat; this follows
using the criterion in \cite[p.~139, Proposition 2.21 (a)]{DMOS} for a homomorphism between two affine
group schemes over $k$ to be faithfully flat. Consequently, the composition of homomorphisms
$$
f_*\,=\, \delta_*\circ\gamma_*
$$
is also faithfully flat.
\end{proof}



\begin{thebibliography}{ZZZZZZ}

\bibitem[BaPa]{BaPa} V. Balaji and A. J. Parameswaran, An analogue of the Narasimhan-Seshadri
theorem in higher dimensions and some applications, {\it Jour. Top.} {\bf 4} (2011), 105--140.

\bibitem[BG]{BG} I. Biswas and T. L. G\'omez, Connections and Higgs
fields on a principal bundle, {\it Ann. Glob. Anal. Geom.} {\bf 33} (2008), 19--46.

\bibitem[BH]{BH} I. Biswas and Y. I. Holla, Semistability and numerically effectiveness in
positive characteristic, {\it Internat. J. Math.} {\bf 22} (2011), 25--46. 

\bibitem[BiPa1]{BP1} I. Biswas and A. J. Parameswaran, Ramified covering maps and
stability of pulled back bundles, \textit{Int. Math. Res. Not.} (2022), No. 17, 12821--12851.

\bibitem[BiPa2]{BP2} I. Biswas and A. J. Parameswaran, Canonical subsheaves of
torsionfree semistable sheaves, {\it Math. Scand.} {\bf 128} (2022), 201--220.

\bibitem[BDP]{BDP} I. Biswas, S. Das and A. J. Parameswaran, Genuinely ramified maps and
stable vector bundles, \textit{Int. Jour. Math.} {\bf 33}, No. 5, (2022),
Paper No. 2250039.

\bibitem[BPS]{BPS} I. Biswas, A. J. Parameswaran and S. Subramanian, Monodromy groups of
strongly semistable bundles on curves, {\it Duke Math. Jour.} {\bf 132} (2006), 1--48. 

\bibitem[DMOS]{DMOS} P. Deligne, J. S. Milne, A. Ogus and K.-y. Shih,
{\it Hodge cycles, motives, and Shimura varieties.}
Lecture Notes in Mathematics, 900. Springer-Verlag, Berlin-New York, 1982. 

\bibitem[Ha]{Ha} R. Hartshorne, {\it Algebraic geometry}, Graduate Texts in Mathematics, 
No. 52. Springer-Verlag, New York-Heidelberg, 1977.

\bibitem[HL]{HL} D. Huybrechts and M. Lehn, {\it The geometry of moduli spaces of sheaves}, Aspects
of Mathematics, E31, Friedr. Vieweg~\&~Sohn, Braunschweig, 1997. 

\bibitem[La]{La} A. Langer, On the S-fundamental group scheme, {\it Ann. Inst. Fourier}
{\bf 61} (2011), 2077--2119. 

\end{thebibliography}
\end{document}